\numberwithin{equation}{section}
\newcommand{\R}{\mathbb{R}}
\newcommand{\N}{\mathbb{N}}
\newcommand{\Z}{\mathbb{Z}}
\newcommand{\B}{\mathcal{B}}
\newcommand{\J}{\mathcal{J}}
\newcommand{\I}{\mathcal{I}}
\DeclareMathOperator{\h}{H^{s}(\R^{N})}
\newtheorem{lem}{Lemma}
\newtheorem{prop}{Proposition}
\newtheorem{thm}{Theorem}
\newtheorem{defn}{Definition} 
\theoremstyle{remark}
\newtheorem{remark}{Remark}
\begin{document}
\title[Ground states for superlinear fractional Schr\"odinger equations]{Ground states for superlinear fractional Schr\"odinger equations in $\R^{N}$}

\author{Vincenzo Ambrosio}
\address{Dipartimento di Matematica e Applicazioni "R. Caccioppoli"\\
         Universit\`a degli Studi di Napoli Federico II\\
         via Cinthia, 80126 Napoli, Italy}
\email{vincenzo.ambrosio2@unina.it}

\maketitle

\begin{abstract}
In this paper we study ground states of the following fractional Schr\"odinger equation
\begin{equation*}
\left\{
\begin{array}{ll}
(- \Delta)^{s} u + V(x) u = f(x, u) \, \mbox{ in } \, \R^{N},\\
u\in \h
\end{array}
\right.
\end{equation*}
where $s\in (0,1)$, $N>2s$ and $f$ is a continuous function satisfying a suitable growth assumption weaker than the Ambrosetti-Rabinowitz condition.\\
We consider the cases when the potential $V(x)$ is $1$-periodic or  has a bounded potential well.
\end{abstract}

\section{\bf Introduction}
\noindent
Recently there has been an increasing interest in the study of nonlinear partial differential equations driven by fractional operators, from a pure mathematical point of view as well as from concrete applications, since these operators naturally arise in several fields of research like obstacle problem, phase transition, conservation laws, financial market, flame propagations, ultra relativistic limits of quantum mechanic, minimal surfaces and water wave. The literature is too wide to attempt a reasonable list of references here, so we derive the reader to the work by Di Nezza, Patalluci and Valdinoci \cite{DPV}, where a more extensive bibliography and an introduction to the subject are given.\\
The present paper is devoted to the study of the following equation: 
\begin{equation}\label{P}
\left\{
\begin{array}{ll}
(- \Delta)^{s} u + V(x) u = f(x, u) \, \mbox{ in } \, \R^{N},\\
u\in \h
\end{array}
\right.
\end{equation}
where $s\in (0,1)$, $N>2s$, the potential $V(x)$  and the nonlinearity $f:\R^{N}\times \R\rightarrow \R$ satisfy the following assumptions:
\begin{compactenum}[(V1)]
\item $V \in C(\R^{N})$ and $\alpha \leq V(x) \leq \beta$; 
%\item $p(x)$ is $1$-periodic.
\end{compactenum}
\begin{compactenum}[(f1)]
\item $f\in C(\R^{N}\times \R)$ is $1$-periodic in $x$ and 
$$
\lim_{|t|\rightarrow \infty} \frac{f(x,t)}{|t|^{2^{*}_{s}-1}} =0 \quad \mbox{ uniformly in } x\in \R^{N}
%\lim_{|t|\rightarrow \infty} \frac{F(x,t)}{|t|^{2}} = +\infty
$$  
where $2^{*}_{s}= \frac{2N}{N-2s}$; 
%$\displaystyle{F(x,t)=\int_{0}^{t} f(x, \tau) \, d\tau}$ and $2^{*}_{s}= \frac{2N}{N-2s}$; 
\item $f(x,t) = o(t)$ as $|t|\rightarrow 0$ uniformly in $x\in \R^{N}$.
\end{compactenum}
Here $\displaystyle{(- \Delta)^{s}}$ can be defined, for smooth functions $u$, by 
$$
(-\Delta)^{s} u(x) = c_{N,s}\, P.V. \int_{\R^{N}} \frac{u(x)-u(y)}{|x-y|^{N+2s}} \, dy,
$$
where P.V. stands for the Cauchy principal value and $c_{N,s}$ is a normalization constant; see \cite{CS1, DPV}.\\ 
Equation (\ref{P}) arises in the study of the Fractional Schr\"odinger equation
\begin{equation*}
\imath \frac{\partial \psi}{\partial t}+(- \Delta)^{s} \psi=H(x,\psi) \mbox{ in } \R^{N}\times \R
\end{equation*}
when the wave function $\psi$ is a standing wave, that is $\displaystyle{\psi(x,t)= u(x) e^{-\imath c t}}$, where $c$ is a constant.
% As usual, $\psi(x,t)$ is the quantum mechanical probability amplitude for a given particle (for simplicity, of unit mass) to have position $x$ at time $t$.\\
This equation was introduced by Laskin \cite{Laskin1, Laskin2} and comes from an extension of the Feynman path integral from the Brownian-like to the Levy-like quantum mechanical paths. \\   
In recent years great attention has been focused on the fractional Schr\"odinger equation. 
Felmer, Quaas \& Tan \cite{FQT} studied the existence and regularity of positive solution to (\ref{P}) with $V(x)=1$ for general $s\in (0,1)$ when $f$ has subcritical growth and satisfies the Ambrosetti-Rabinowitz condition. Secchi \cite{Secchi1, Secchi2} proved some existence results for (\ref{P}) under the assumptions that the nonlinearity is either of perturbative type or satisfies the Ambrosetti-Rabinowitz condition. 
Cheng \cite{Cheng} proved the existence of bound state solutions for (\ref{P}) in which the potential $V(x)$ is unbounded and $f(x,u)=|u|^{p-1}u$ with $1<p<\frac{4s}{N}+1$.
%Torres [Torres] studied  a perturbation of (\ref{P}) when $p(x)\rightarrow +\infty$ as $|x|\rightarrow \infty$ and $f(x,u)=g(u)+h(x)$ with $f$ satisfying (AR) and $h$ is a perturbation. 
%Feng \cite{Feng} uses a concentration compactness principle in fractional Sobolev spaces to prove the existence of ground states to (\ref{P}) with $V$ asymptotically linear and $f(x,u)=|u|^{p-1}u$ with $0<p<\frac{4s}{N-2s}$.
%Liu and Gan [LG] studied a perturbed fractional Schrdinger equation with critical nonlinearity.

\noindent
When $s=1$, formally, equation in (\ref{P}) reduces to the classical  Nonlinear Schr\"odinger Equation
\begin{equation}\label{NSE}
-\Delta u + V(x) u = f(x, u) \, \mbox{ in } \, \R^{N}, 
\end{equation}
which has been extensively studied in the last twenty years and we do not even try to review the huge bibliography.\\
To deal with (\ref{NSE}) many authors supposed that the nonlinear term satisfied the following condition due to Ambrosetti and Rabinowitz \cite{AR}
\begin{equation}
\exists \, \mu >2, \, R>0 : 0<\mu F(x,t) \leq f(x,t)t \quad \forall \, |t|\geq R,  \tag{AR}
\end{equation}
where $F$ is the primitive of $f$ with respect to the second variable. \\
This condition is very useful in critical point theory since it ensures the boundedness of the Palais-Smale sequences of the functional associated to (\ref{NSE}).
However, there are many functions which are superlinear at infinity, but do not satisfy (AR).
At this purpose, we would note that from the condition (AR) and the fact that $\mu>2$, it follows that
\begin{compactenum}[(f3)]
\item $\displaystyle{\lim_{|t|\rightarrow \infty}\frac{F(x,t)}{|t|^{2}}=+\infty}$, where $\displaystyle{F(x,t)=\int_{0}^{t} f(x, \tau) \, d\tau}$.
\end{compactenum}
Of course, also condition $(f3)$ characterizes the nonlinearity $f$ to be superlinear at infinity. 
It is easily seen that the function $f(x,t)=t \log(1+|t|)$ verifies $(f3)$ and does not satisfy (AR).
In order to study the nonlinear problem (\ref{NSE}) and to drop the condition (AR), Jeanjean in \cite{J} introduced the following assumption on $f$:
\begin{compactenum}[(f4)]
\item There exists $\lambda \geq 1$ such that
$$
G(x, \theta t)\leq \lambda G(x, t) 
\mbox{ for } (x, t)\in \R^{N}\times \R \mbox{ and } \theta\in [0,1],
$$
where $\displaystyle{G(x,t) = f(x,t)t - 2F(x,t)}$. 
\end{compactenum}

\noindent
The aim of this paper is to investigate solutions of the corresponding fractional case of problem (\ref{NSE}) without assuming (AR).
Since $u=0$ is a trivial solution to (\ref{P}) by $(f2)$, we will look for nontrivial solutions to (\ref{P}).

Our first result can be stated as follows
\begin{thm}\label{thm1}
Assume that $f$ satisfies $(f1)-(f4)$ and $V$ satisfies $(V1)$ and 
\begin{compactenum}[(V2)]
\item $V(x)$ is $1$-periodic.
\end{compactenum}  
Then there exists a nontrivial ground state solution $u\in \h$ to (\ref{P}). 
\end{thm}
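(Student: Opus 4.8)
The plan is to attack \eqref{P} by variational methods, realizing its weak solutions as critical points of the energy functional
\begin{equation*}
I(u)=\frac{1}{2}\|u\|^{2}-\int_{\R^{N}}F(x,u)\,dx,
\end{equation*}
where, up to the constant $c_{N,s}$, $\|u\|^{2}=\iint_{\R^{2N}}\frac{|u(x)-u(y)|^{2}}{|x-y|^{N+2s}}\,dx\,dy+\int_{\R^{N}}V(x)\,u^{2}\,dx$ is the quadratic form associated with $(-\Delta)^{s}+V$. Thanks to $(V1)$ (with $\alpha>0$) this defines a norm on $\h$ equivalent to the standard one, so $I$ is well defined and of class $C^{1}$. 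From $(f1)$ and $(f2)$ I would first record the subcritical bound $|F(x,t)|\le\varepsilon(|t|^{2}+|t|^{2^{*}_{s}})+C_{\varepsilon}|t|^{p}$ for some $2<p<2^{*}_{s}$. Condition $(f2)$ then makes the origin a strict local minimum, yielding $\rho,\delta>0$ with $I(u)\ge\delta$ on $\|u\|=\rho$, while $(f3)$ forces $I(tu)\to-\infty$ along any ray, giving some $e$ with $I(e)<0$. Hence $I$ has the mountain pass geometry and one sets $c=\inf_{\gamma\in\Gamma}\max_{t\in[0,1]}I(\gamma(t))>0$.

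Since (AR) is dropped, the delicate point is the boundedness of the approximating sequences, and here I would work with \emph{Cerami} sequences. The Cerami version of the mountain pass theorem produces $(u_{n})$ with $I(u_{n})\to c$ and $(1+\|u_{n}\|)\,\|I'(u_{n})\|\to0$, whence
\begin{equation*}
c+o(1)=I(u_{n})-\tfrac{1}{2}\langle I'(u_{n}),u_{n}\rangle=\tfrac{1}{2}\int_{\R^{N}}G(x,u_{n})\,dx.
\end{equation*}
To bound $\|u_{n}\|$ I argue by contradiction, writing $v_{n}=u_{n}/\|u_{n}\|$ so that, up to a subsequence, $v_{n}\rightharpoonup v$. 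If $v\neq0$ then $|u_{n}|\to\infty$ on $\{v\neq0\}$, and $(f3)$ with Fatou's lemma forces $\int_{\R^{N}}F(x,u_{n})/\|u_{n}\|^{2}\to+\infty$, contradicting $I(u_{n})/\|u_{n}\|^{2}\to0$. If $v=0$ I invoke Jeanjean's monotonicity trick: choose $t_{n}\in[0,1]$ realizing $\max_{t\in[0,1]}I(tu_{n})$; a rescaling argument based on vanishing gives $I(t_{n}u_{n})\to+\infty$, whereas $(f4)$ yields $G(x,t_{n}u_{n})\le\lambda G(x,u_{n})$ and hence $I(t_{n}u_{n})\le\lambda\big(I(u_{n})-\tfrac12\langle I'(u_{n}),u_{n}\rangle\big)\to\lambda c$, a contradiction. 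This step, replacing (AR) by $(f3)$ and $(f4)$, is the main obstacle of the proof.

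With $(u_{n})$ bounded, up to a subsequence $u_{n}\rightharpoonup u$ in $\h$. The second difficulty is the loss of compactness on $\R^{N}$, which I would overcome using the periodicity $(V2)$ and the $1$-periodicity of $f$ in $(f1)$. By a Lions-type vanishing lemma, either $\sup_{y}\int_{B(y,1)}|u_{n}|^{2}\to0$, so that $u_{n}\to0$ in $L^{p}(\R^{N})$ for $2<p<2^{*}_{s}$ and then $c=0$, a contradiction; or there exist $y_{n}\in\Z^{N}$ with $\int_{B(y_{n},1)}|u_{n}|^{2}\ge\delta_{0}>0$. Replacing $u_{n}$ by the translates $u_{n}(\cdot+y_{n})$, which leave both $I$ and $I'$ invariant by periodicity, I obtain a Cerami sequence with nonvanishing local mass, whose weak limit $u$ is therefore nonzero. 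Passing to the limit in $I'(u_{n})\to0$ shows $I'(u)=0$, so $u$ is a nontrivial weak solution of \eqref{P}.

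It remains to promote $u$ to a \emph{ground state}. I would set $m=\inf\{I(w):w\in\h\setminus\{0\},\ I'(w)=0\}$; by the previous step this set is nonempty, and for every such $w$ one has $I(w)=\tfrac12\int_{\R^{N}}G(x,w)\,dx$ with $G\ge0$ (take $\theta=0$ in $(f4)$, recalling $G(x,0)=0$), so $m\ge0$. Taking a minimizing sequence of nontrivial critical points and applying once more the translation-plus-weak-limit argument yields a nonzero critical point $\bar u$, and the weak lower semicontinuity of $w\mapsto\int_{\R^{N}}G(x,w)\,dx$ (via Fatou) gives $I(\bar u)\le m$, forcing $I(\bar u)=m$. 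Therefore $\bar u$ is the desired nontrivial ground state solution of \eqref{P}.
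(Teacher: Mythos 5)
Your overall strategy coincides with the paper's: mountain pass geometry, Cerami sequences, Jeanjean's monotonicity trick in place of (AR), and recovery of compactness through $\Z^{N}$-translations. However, there is a genuine gap in your boundedness argument. You split into the cases $v\neq 0$ and $v=0$, where $v$ is the weak limit of $v_{n}=u_{n}/\|u_{n}\|$, and in the case $v=0$ you appeal to ``a rescaling argument based on vanishing'' to conclude $I(t_{n}u_{n})\to+\infty$. But $v=0$ does not imply that $\{v_{n}\}$ vanishes in the sense of Lions: the sequence may retain a fixed amount of $L^{2}$-mass on balls $B_{2}(z_{n})$ with $|z_{n}|\to\infty$ while still converging weakly to $0$. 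In that situation you have neither $v_{n}\to 0$ in $L^{q}(\R^{N})$ for $2<q<2^{*}_{s}$ (so $\int_{\R^{N}} F(x,\rho v_{n})\,dx\to 0$ fails, and with it $I(t_{n}u_{n})\to+\infty$), nor a set of positive measure on which $|u_{n}|\to\infty$ (so the Fatou/$(f3)$ branch fails as well). The correct dichotomy is vanishing versus non-vanishing (Lemma \ref{lion}). In the non-vanishing case one must translate by lattice points $\xi_{n}\in\Z^{N}$, pass to $\tilde v_{n}=v_{n}(\cdot+\xi_{n})$ with nontrivial weak limit, and --- crucially --- use the $1$-periodicity of $f$ in $x$ from $(f1)$ (and of $V$ from $(V2)$) to write $\int_{\R^{N}}F(x,u_{n})\,dx=\int_{\R^{N}}F(x,\|u_{n}\|\tilde v_{n})\,dx$ before applying $(f3)$ and Fatou. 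This is precisely where the periodicity hypotheses enter the boundedness proof, and your version omits it.

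Two smaller points. First, several of your steps (the Fatou arguments, the bound $0\le F(x,\rho t)\le\varepsilon(|t|^{2}+|t|^{2^{*}_{s}})+C_{\varepsilon}|t|^{q}$, the superlinearity of $I$ along rays) tacitly use $F\ge 0$; this is not an assumption and must be derived from $(f2)$ and $(f4)$, as in Lemma \ref{F}. Second, in the ground state step the level $m$ could a priori equal $0$, so you cannot exclude vanishing of the minimizing sequence of critical points by positivity of the level; instead one uses that every critical point satisfies $\|w\|^{2}=\int_{\R^{N}} f(x,w)w\,dx$ together with the uniform lower bound $\|w\|\ge\eta$ of Lemma \ref{lem2.1}(i), so that vanishing would force $\|u_{n}\|\to 0$, a contradiction. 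Both of these are easily repaired, but the dichotomy issue above is a real gap as written.
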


\noindent
One of the main difficulty in studying (\ref{P}) is the nonlocal character of the fractional Laplacian $(-\Delta)^{s}$ with $s\in (0,1)$. To overcome this difficulty, Caffarelli and Silvestre \cite{CafSil} showed that it is possible to realize $(-\Delta)^{s}$  as an operator that maps a Dirichlet boundary condition to a Neumann boundary condition via an extension degenerate elliptic problem in $\R^{N+1}_{+}$. However, although this approach is very common nowadays (see \cite{CS1, CSS, FL, SV}), in this paper we prefer to investigate (\ref{P}) directly in $\h$ in order to apply the techniques used to study the case $s=1$.\\
More precisely, we will look for the critical points for the following functional
\begin{equation*}
\J(u)= \frac{1}{2} \Bigl[\iint_{\R^{2N}} \frac{|u(x)-u(y)|^{2}}{|x-y|^{N+2s}} dxdy  +\int_{\R^{N}} V(x) u^{2}(x) dx \Bigr]- \int_{\R^{N}} F(x,u) \, dx. 
\end{equation*}
\noindent
By assumptions on $f$ follow easily that $\J$ has a Mountain Pass geometry. Namely setting
$$
\Gamma = \{\gamma \in C([0,1], \h) : \gamma(0)=0 \, \mbox{ and } \J(\gamma(1))<0\} 
$$
we have $\Gamma\neq \emptyset$ and 
\begin{equation*}
c= \inf_{\gamma \in \Gamma} \max_{t\in [0,1]} \J(\gamma(t)).
\end{equation*}
The value $c$ is called the Mountain Pass level for $\J$.
Ekeland's principle \cite{Ek} guarantees the existence of a Cerami sequence at the level $c$. 
Hence, by using similar arguments to those developed in \cite{JT, Liu} and the $\Z^{N}$-invariant of the problem (\ref{P}), we will prove that every Cerami sequence for $\J$ is bounded and that there exists a subsequence which converges to a critical point for $\J$.\\
Finally, we will also consider the potential well case. We will assume that $V(x)$ satisfies, in addition to $(V1)$, the following condition 
\begin{compactenum}[(V3)]
\item $\displaystyle{V(x)< V_{\infty} := \lim_{|y|\rightarrow \infty} V(y)<\infty , \quad \forall x\in \R^{N}}$
\end{compactenum}
and that $f(x,u)=b(x) f(u)$ where $b\in C(\R^{N})$ and
\begin{equation}\label{3.11}
0<b_{\infty}:=\lim_{|y|\rightarrow \infty} b(y) \leq b(x) \leq \bar{b}<\infty
\end{equation}
for any $x\in \R^{N}$ and $f$ satisfies $(f1)-(f4)$. \\
Therefore our problem becomes
\begin{equation}\label{PP}
\left\{
\begin{array}{ll}
(-\Delta)^{s} u + V(x)u = b(x)f(u) \mbox{ in } \R^{N} \\
u\in \h
\end{array}.
\right.
\end{equation}
To study (\ref{PP}), we will use the energy comparison method in \cite{JT}. 
More precisely, introducing the energy functional at infinity 
\begin{equation*}
%\label{Jinfty}
\J_{\infty}(u)= \frac{1}{2} \Bigl[\iint_{\R^{2N}} \frac{|u(x)-u(y)|^{2}}{|x-y|^{N+2s}} dxdy  +\int_{\R^{N}} V_{\infty} u^{2}(x) dx \Bigr] - \int_{\R^{N}} b_{\infty} F(u) \, dx
\end{equation*}
we will show that, under the above assumptions on $f$ and $V$, $\J$ has a nontrivial critical point provided that
\begin{equation}\label{minfty}
c<m_{\infty}
\end{equation} 
where
$$
m_{\infty} =\inf \{\J_{\infty}(u) : u\neq 0\,  \mbox{ and }\,  \J_{\infty}'(u)=0\}. 
$$
To prove (\ref{minfty}) we will exploit that our problem at infinity is autonomous  
$$
(-\Delta)^{s} u= -V_{\infty}u+b_{\infty}f(u) \, \mbox{ in } \R^{N},
$$
so it admits a least energy solution satisfying the Pohozaev identity; see \cite{ChangWang}.  This information will be useful to deduce the existence of a path $\gamma \in \Gamma$ such that $\displaystyle{\max_{t\in [0,1]} \J(\gamma(t))<m_{\infty}}$. 
Combining these facts, we will be able to prove our main second result:  

\begin{thm}\label{thm2}
Let $N> 2s$. Assume that $V$ satisfies $(V1)$ and $(V3)$, and that $f$ verifies the assumptions $(f1)-(f4)$. Then (\ref{PP}) has a ground state. 
\end{thm}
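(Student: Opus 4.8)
The plan is to obtain $u$ as a nontrivial critical point of $\J$ sitting at the mountain pass level $c$, recovering the compactness that is generically lost on $\R^{N}$ through the strict energy inequality $c<m_\infty$. Exactly as in Theorem \ref{thm1}, I would first check that $\J$ has the mountain pass geometry: $(f2)$ gives the local well structure near the origin while $(f3)$ produces a function $e$ with $\J(e)<0$, so that $\Gamma\neq\emptyset$ and $c>0$. Ekeland's principle then yields a Cerami sequence $(u_n)$ at level $c$, and the monotonicity condition $(f4)$ forces its boundedness by the same argument used in the periodic case. Everything then reduces to two points: proving $c<m_\infty$, and using it to show that the weak limit of $(u_n)$ is nontrivial.

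For the comparison $c<m_\infty$, I would exploit that the problem at infinity $(-\Delta)^{s}u=-V_\infty u+b_\infty f(u)$ is autonomous, so by \cite{ChangWang} it admits a least energy solution $w$ with $\J_\infty(w)=m_\infty$ satisfying the Pohozaev identity; writing $[w]^{2}=\iint_{\R^{2N}}\frac{|w(x)-w(y)|^{2}}{|x-y|^{N+2s}}\,dxdy$ for the Gagliardo seminorm, this identity reads $\frac{N-2s}{2}[w]^{2}=N\int_{\R^{N}}\bigl(b_\infty F(w)-\tfrac12 V_\infty w^{2}\bigr)\,dx$. Introducing the dilations $w_\theta(x)=w(x/\theta)$ for $\theta>0$ with $w_0=0$, the scaling relations $[w_\theta]^{2}=\theta^{N-2s}[w]^{2}$, $\int w_\theta^{2}=\theta^{N}\int w^{2}$ and $\int F(w_\theta)=\theta^{N}\int F(w)$ together with the Pohozaev identity show that $\theta\mapsto\J_\infty(w_\theta)$ attains its maximum $m_\infty$ at $\theta=1$ and tends to $-\infty$ as $\theta\to\infty$. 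Reparametrizing a sufficiently long piece of this curve gives a path in $\Gamma$, so that $c\le\max_\theta\J(w_\theta)$. Now the key estimate: since $w_\theta>0$ everywhere, $(V3)$ gives $\frac12\int(V(x)-V_\infty)w_\theta^{2}\,dx<0$ strictly, while $b(x)\ge b_\infty$ and $F\ge 0$ make the $b$-correction nonpositive; hence, evaluating at the maximizer $\theta^{*}\in(0,\infty)$, $c\le\J(w_{\theta^{*}})<\J_\infty(w_{\theta^{*}})\le m_\infty$, which is (\ref{minfty}).

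It remains to extract a nontrivial critical point. Up to a subsequence $u_n\rightharpoonup u$ in $\h$, and the subcritical growth $(f1)$ combined with the local compactness of the embedding yields $\J'(u)=0$. The main obstacle is ruling out $u=0$. If $u=0$, then $(u_n)$ cannot vanish (otherwise $u_n\to0$ strongly and $c=0$), so by the fractional Lions lemma there are $y_n$ with $|y_n|\to\infty$ and $\int_{B_R(y_n)}u_n^{2}\ge\delta>0$; the translates $\tilde u_n=u_n(\cdot+y_n)$ converge weakly to some $\tilde u\neq0$. Since $V(x+y_n)\to V_\infty$ and $b(x+y_n)\to b_\infty$ locally uniformly, $\tilde u$ is a nontrivial critical point of $\J_\infty$, whence $\J_\infty(\tilde u)\ge m_\infty$. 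Using $G\ge0$ (which follows from $(f4)$ with $\theta=0$), the identity $\J(u_n)-\tfrac12\langle\J'(u_n),u_n\rangle=\tfrac12\int b(x)G(u_n)\,dx\to c$, and Fatou's lemma after translation, I obtain
\[
2c=\lim_{n}\int_{\R^{N}}b(x)G(u_n)\,dx\ge\int_{\R^{N}}b_\infty G(\tilde u)\,dx=2\J_\infty(\tilde u)\ge 2m_\infty,
\]
contradicting $c<m_\infty$. Hence $u\neq0$.

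Finally, the same identity together with Fatou gives $\J(u)=\tfrac12\int b(x)G(u)\,dx\le\liminf_{n}\tfrac12\int b(x)G(u_n)\,dx=c$, while the mountain pass characterization combined with $(f4)$ (as established in the proof of Theorem \ref{thm1}) shows that every nontrivial critical point has energy at least $c$. Therefore $\J(u)=c$ is the least critical value, i.e.\ $u$ is a ground state of (\ref{PP}). I expect the delicate point to be this non-vanishing step: controlling the translated functionals and rigorously passing to the limit $\J_\infty'(\tilde u)=0$ in the nonlocal setting requires care, since the Gagliardo seminorm does not localize, so one must handle the weak continuity of $(-\Delta)^{s}$ under translations and a Brezis--Lieb type splitting of the energy with some attention.
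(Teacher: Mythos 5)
Your construction of a nontrivial critical point is essentially sound and close in spirit to the paper's Lemma \ref{JT}: both arguments hinge on a Cerami sequence at the mountain pass level, the comparison $\J<\J_{\infty}$ on $\h\setminus\{0\}$ coming from $(V3)$ and (\ref{3.11}), and a Pohozaev-based dilation path for the autonomous problem at infinity. The one organisational difference is that you prove $c<m_{\infty}$ up front by invoking the \emph{existence} of a least energy solution $w$ of the autonomous equation with $\J_{\infty}(w)=m_{\infty}$ (which requires checking that the hypotheses of \cite{ChangWang} hold, and is not needed), whereas the paper only ever applies the dilation construction (Proposition \ref{prop3.1}) to the specific critical point $v$ of $\J_{\infty}$ obtained as the translated weak limit, for which Fatou's lemma already gives $\J_{\infty}(v)\leq c$; this yields the contradiction $c\leq\max_{t}\J(\gamma_{\infty}(t))<\J_{\infty}(v)\leq c$ without ever knowing that $m_{\infty}$ is attained. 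Also, your remark that ``$w_{\theta}>0$ everywhere'' is both unjustified under $(f1)$--$(f4)$ and unnecessary: $(V3)$ gives $\int(V(x)-V_{\infty})w_{\theta}^{2}\,dx<0$ for any $w_{\theta}\not\equiv 0$ regardless of sign.

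The genuine gap is in your last step. You assert that ``the mountain pass characterization combined with $(f4)$ shows that every nontrivial critical point has energy at least $c$,'' and conclude that $\J(u)=c$ is the least critical value. This is not established anywhere (not in the proof of Theorem \ref{thm1} either), and it is not expected to hold for the non-autonomous functional $\J$: the dilation argument that produces a path through a critical point with maximum equal to its energy relies on the Pohozaev identity for the \emph{autonomous} problem and fails when $V$ and $b$ depend on $x$. Without it you only know $\J(u)\leq c$, which does not make $u$ a ground state. The missing piece is a separate minimization over the critical set: set $m=\inf\{\J(v):v\neq 0,\ \J'(v)=0\}$, note $0\leq m\leq\J(u)\leq c$ (nonnegativity from $G\geq 0$), take a sequence of nontrivial critical points $u_{n}$ with $\J(u_{n})\to m$, observe it is a bounded Cerami sequence with $\|u_{n}\|\geq\eta$ by Lemma \ref{lem2.1}, hence non-vanishing, extract a weak limit $\tilde u$ which is critical with $\J(\tilde u)\leq m$ by Fatou, and rule out $\tilde u=0$ by the same comparison-with-$\J_{\infty}$ argument, which would give $c<\J_{\infty}(v)\leq m\leq c$. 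That is how the paper actually closes the proof, and your write-up needs this step (or a valid substitute) to deliver a ground state rather than merely a nontrivial solution at level at most $c$.
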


\noindent
The paper is organized as follows: In Section $2$ we introduce a variational setting of our problem and collect some preliminary results; in Section $3$ we prove the existence of a nontrivial ground state to (\ref{P}) when the potential $V$ is assumed $1$-periodic; finally, under the assumption that $V$ has a bounded potential well, we verify that it is possible to find a ground state to (\ref{PP}).

\section{\bf Preliminaries and functional setting}

\noindent
In this preliminary Section, for the reader's convenience, we collect some basic results that will be used in the forthcoming Sections.

Let us denote by $|\cdot|_{L^{q}(\R^{N})}$-the $L^{q}$ norm of a function $u:\R^{N}\rightarrow \R$.
We define the homogeneous fractional Sobolev space $\mathcal{D}^{s}(\R^{N})$ as the completion of $C^{\infty}_{c}(\R^{N})$ with respect to the norm
$$
||u||^{2}_{\mathcal{D}^{s}(\R^{N})} := \iint_{\R^{2N}} \frac{|u(x)-u(y)|^{2}}{|x-y|^{N+2s}} dx dy=[u]_{\h}^{2}.
$$
We denote by $\h$ the standard fractional Sobolev space, defined as the set of $u\in \mathcal{D}^{s}(\R^{N})$ satisfying $u\in L^{2}(\R^{N})$ with the norm
\begin{align}\label{n}
||u||_{\h}&:= \Bigl(\iint_{\R^{2N}} \frac{|u(x)-u(y)|^{2}}{|x-y|^{N+2s}} dx dy+\int_{\R^{N}} u^{2} dx \Bigr)^{\frac{1}{2}} \nonumber \\
&=[u]_{\h}^{2}+|u|_{L^{2}(\R^{N})}^{2}.  
\end{align}
For any $u\in \h$, it holds the following Sobolev inequality
\begin{equation*}
|u|_{L^{2^{*}_{s}}(\R^{N})} \leq C ||u||^{2}_{\mathcal{D}^{s}(\R^{N})}.
\end{equation*}

\noindent
Now, we recall the following lemmas which will be useful in the sequel. 
\begin{lem}\cite{DPV}\label{compemb}
$\h$ is continuously embedded in $L^{q}(\R^{N})$ for any $q\in [2, 2^{*}_{s}]$ and compactly embedded in $L^{q}_{loc}(\R^{N})$ for any $q\in [2, 2^{*}_{s})$. 
\end{lem}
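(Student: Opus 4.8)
\medskip

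The plan is to obtain the continuous embedding by interpolating between the two endpoint exponents, and then to deduce the local compactness from a uniform translation estimate combined with the Riesz--Fr\'echet--Kolmogorov compactness criterion.

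For the continuous embedding, the endpoint $q=2$ is immediate from the definition \eqref{n} of the norm, since $|u|_{L^{2}(\R^{N})}\le \|u\|_{\h}$. The other endpoint $q=2^{*}_{s}$ is precisely the Sobolev inequality recalled above, which gives $|u|_{L^{2^{*}_{s}}(\R^{N})}\le C[u]_{\h}\le C\|u\|_{\h}$. For an intermediate exponent $q\in(2,2^{*}_{s})$, I would write $\frac{1}{q}=\frac{1-\theta}{2}+\frac{\theta}{2^{*}_{s}}$ with $\theta\in(0,1)$ and apply the log-convexity of the $L^{p}$-norms (H\"older's inequality) to obtain $|u|_{L^{q}(\R^{N})}\le |u|_{L^{2}(\R^{N})}^{1-\theta}|u|_{L^{2^{*}_{s}}(\R^{N})}^{\theta}\le C\|u\|_{\h}$. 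This settles the continuous embedding for every $q\in[2,2^{*}_{s}]$.

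For the local compactness, fix a bounded domain $\Omega\subset\R^{N}$ and a bounded sequence $\{u_{n}\}$ in $\h$. The key ingredient is the translation estimate
\[
\int_{\R^{N}}|u(x+h)-u(x)|^{2}\,dx\le C|h|^{2s}[u]_{\h}^{2}\qquad\text{for all }h\in\R^{N}.
\]
I would prove it via Plancherel's theorem: the left-hand side equals $\int_{\R^{N}}|\hat{u}(\xi)|^{2}\,|e^{\imath h\cdot\xi}-1|^{2}\,d\xi$, while $[u]_{\h}^{2}$ is comparable to $\int_{\R^{N}}|\xi|^{2s}|\hat{u}(\xi)|^{2}\,d\xi$, so the claim reduces to the elementary pointwise bound $|e^{\imath h\cdot\xi}-1|^{2}\le C|h|^{2s}|\xi|^{2s}$, which one checks by distinguishing the regimes $|h|\,|\xi|\le 1$ and $|h|\,|\xi|\ge 1$ (here $s\in(0,1)$ is used). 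This provides a modulus of continuity for the translates that is uniform over the bounded family $\{u_{n}\}$; together with the uniform $L^{2}(\Omega)$-bound, the Riesz--Fr\'echet--Kolmogorov theorem yields a subsequence converging strongly in $L^{2}(\Omega)$.

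Finally, to reach a general $q\in[2,2^{*}_{s})$ I would interpolate once more. Since $\{u_{n}\}$ is bounded in $L^{2^{*}_{s}}(\Omega)$ by the continuous embedding, the inequality $|u_{n}-u_{m}|_{L^{q}(\Omega)}\le|u_{n}-u_{m}|_{L^{2}(\Omega)}^{1-\theta}|u_{n}-u_{m}|_{L^{2^{*}_{s}}(\Omega)}^{\theta}$, valid with $\theta\in(0,1)$ precisely because $q<2^{*}_{s}$, shows the subsequence is Cauchy in $L^{q}(\Omega)$ and hence convergent. I expect the main obstacle to be the translation estimate and the correct invocation of the Kolmogorov criterion on $\Omega$; a minor additional care is that $u(\cdot+h)$ may leave $\Omega$, which is handled by working on a slightly enlarged domain and using that each $u_{n}$ is defined on all of $\R^{N}$.
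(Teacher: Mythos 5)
Your proof is correct. Note, however, that the paper does not prove this lemma at all: it is quoted verbatim from the reference [DPV] (the \emph{Hitchhiker's guide}), so there is no in-paper argument to compare against. Your route --- endpoint bounds ($q=2$ from the definition of the norm, $q=2^{*}_{s}$ from the Sobolev inequality) plus log-convexity of $L^{p}$-norms for the continuous embedding, and the translation estimate $\int_{\R^{N}}|u(x+h)-u(x)|^{2}\,dx\le C|h|^{2s}[u]_{\h}^{2}$ fed into the Riesz--Fr\'echet--Kolmogorov criterion for the local compactness --- is the standard Fourier-analytic proof and all the steps check out, including the case split $|h|\,|\xi|\le 1$ versus $|h|\,|\xi|\ge 1$ and the final interpolation using $\theta<1$ for $q<2^{*}_{s}$. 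The one ingredient you use implicitly and should flag is the identity (up to a dimensional constant) $[u]_{\h}^{2}=c\int_{\R^{N}}|\xi|^{2s}|\hat{u}(\xi)|^{2}\,d\xi$; this is itself a nontrivial computation (Proposition 3.4 in [DPV]), so either cite it or include the two-line change of variables that proves it. For what it is worth, [DPV] proves the compactness statement by a different, more elementary covering/averaging argument on small cubes that avoids the Fourier transform; your version is shorter but leans on Plancherel, which is harmless here since everything is $L^{2}$-based.
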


\begin{lem}\cite{FQT}\label{lion}
Let $N>2s$. Assume that $\{u_{k}\}$ is bounded in $\h$ and it satisfies 
$$
\lim_{k\rightarrow +\infty} \sup_{\xi \in \R^{N}} \int_{B_{R}(\xi)} |u_{k} (x)|^{2} dx =0,
$$
where $R>0$. Then $u_{k}\rightarrow 0$ in $L^{q}(\R^{N})$ for $2<q<2^{*}_{s}$. 
\end{lem}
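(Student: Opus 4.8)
The plan is to deduce the global $L^{q}$ decay from a purely local interpolation estimate, summed over a covering of $\R^{N}$ with bounded overlap. First I would fix $q\in(2,2^{*}_{s})$ and choose $\theta\in(0,1)$ by the relation $\frac{1}{q}=\frac{1-\theta}{2}+\frac{\theta}{2^{*}_{s}}$. On any ball $B_{R}(\xi)$, Hölder's inequality with the conjugate exponents $\frac{2}{(1-\theta)q}$ and $\frac{2^{*}_{s}}{\theta q}$ yields the interpolation
$$
\int_{B_{R}(\xi)}|u|^{q}\,dx\le\Bigl(\int_{B_{R}(\xi)}|u|^{2}\,dx\Bigr)^{\frac{(1-\theta)q}{2}}\Bigl(\int_{B_{R}(\xi)}|u|^{2^{*}_{s}}\,dx\Bigr)^{\frac{\theta q}{2^{*}_{s}}}.
$$
Since a ball is a bounded extension domain, the fractional Sobolev embedding $H^{s}(B_{R}(\xi))\hookrightarrow L^{2^{*}_{s}}(B_{R}(\xi))$ holds with a constant independent of $\xi$ (by translation invariance), so the last factor is controlled by $\bigl([u]^{2}_{H^{s}(B_{R}(\xi))}+|u|^{2}_{L^{2}(B_{R}(\xi))}\bigr)^{\theta q/2}$, the seminorm being computed over $B_{R}(\xi)\times B_{R}(\xi)$.

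Next I would cover $\R^{N}$ by balls $\{B_{R}(\xi_{i})\}_{i\in\N}$ so that every point lies in at most $\ell=\ell(N)$ of them. Writing $a_{i}=\int_{B_{R}(\xi_{i})}|u_{k}|^{2}$ and $S_{i}=[u_{k}]^{2}_{H^{s}(B_{R}(\xi_{i}))}+|u_{k}|^{2}_{L^{2}(B_{R}(\xi_{i}))}$, the bounded-overlap property gives $\sum_{i}a_{i}\le\ell\,|u_{k}|^{2}_{L^{2}(\R^{N})}$ and $\sum_{i}S_{i}\le\ell\,\|u_{k}\|^{2}_{\h}\le\ell M$, since each pair $(x,y)$ belongs to $B_{R}(\xi_{i})\times B_{R}(\xi_{i})$ for at most $\ell$ indices; here $M$ bounds the $\h$-norms. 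The hypothesis moreover gives $a_{i}\le\delta_{k}:=\sup_{\xi}\int_{B_{R}(\xi)}|u_{k}|^{2}\to0$. Combining with the local estimate,
$$
\int_{\R^{N}}|u_{k}|^{q}\,dx\le\sum_{i}\int_{B_{R}(\xi_{i})}|u_{k}|^{q}\,dx\le C\sum_{i}a_{i}^{\alpha}\,S_{i}^{\beta},\qquad\alpha=\tfrac{(1-\theta)q}{2},\ \beta=\tfrac{\theta q}{2}.
$$

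The main obstacle is that $\beta=\theta q/2$ may be strictly less than $1$ when $q$ is close to $2$, so one cannot simply bound $\sum_{i}S_{i}^{\beta}$ by $\sum_{i}S_{i}$; indeed $\sum_{i}S_{i}^{\beta}$ need not even be finite. The observation that resolves this is that $\alpha+\beta=q/2=:\sigma>1$, so $\sigma/\alpha$ and $\sigma/\beta$ are conjugate. Hölder's inequality over the index $i$ then gives $\sum_{i}a_{i}^{\alpha}S_{i}^{\beta}\le(\sum_{i}a_{i}^{\sigma})^{\alpha/\sigma}(\sum_{i}S_{i}^{\sigma})^{\beta/\sigma}$. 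As $\sigma>1$, $a_{i}\le\delta_{k}$ and $S_{i}\le\ell M$, one obtains $\sum_{i}a_{i}^{\sigma}\le\delta_{k}^{\sigma-1}\sum_{i}a_{i}\le\ell M\,\delta_{k}^{\sigma-1}$ and $\sum_{i}S_{i}^{\sigma}\le(\ell M)^{\sigma-1}\sum_{i}S_{i}\le(\ell M)^{\sigma}$. Hence $\int_{\R^{N}}|u_{k}|^{q}\,dx\le C\,\delta_{k}^{(\sigma-1)\alpha/\sigma}\to0$, which is precisely the claim. I would close by noting that the embedding constant and $\ell$ depend only on $N$, $s$, $q$ and $R$, so all bounds are uniform in $k$.
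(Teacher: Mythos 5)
Your argument is correct, but note first that the paper itself contains no proof of this statement: the lemma is quoted verbatim from Felmer--Quaas--Tan \cite{FQT}, so strictly speaking there is no in-paper proof to compare against. What you have produced is a correct, self-contained proof, and it differs in one interesting way from the standard argument (the one in \cite{FQT}, which follows the classical Lions concentration--compactness lemma as presented, e.g., in Willem's book). The standard route first proves the conclusion for the single exponent $q=2+\frac{4s}{N}$, which is exactly the choice making the power of the local $H^{s}$-norm in the interpolation equal to $2$, so that after covering $\R^{N}$ by balls with bounded overlap one can sum the local estimates directly against $\sum_{i}S_{i}\leq \ell\,\|u_{k}\|^{2}$; the full range $2<q<2^{*}_{s}$ is then recovered a posteriori by interpolating $L^{q}$ between $L^{2}$, $L^{2+4s/N}$ and $L^{2^{*}_{s}}$, using boundedness at the endpoints. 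You instead treat all $q\in(2,2^{*}_{s})$ in one shot, and your diagnosis of the obstruction is exactly right: for $q$ near $2$ the exponent $\beta=\theta q/2$ on $S_{i}$ drops below $1$ and naive summation fails. Your fix --- discrete H\"older in the covering index with the conjugate pair $\sigma/\alpha$, $\sigma/\beta$ where $\sigma=\alpha+\beta=q/2>1$, combined with $a_{i}\leq\delta_{k}$ and $S_{i}\leq M$ to trade $\ell^{\sigma}$-sums for $\ell^{1}$-sums --- is clean and quantitatively explicit, yielding the rate $\delta_{k}^{(\sigma-1)\alpha/\sigma}$. The remaining ingredients (translation-invariant Sobolev constant on balls of fixed radius $R$, and the overlap bound $\sum_{i}[u]^{2}_{H^{s}(B_{R}(\xi_{i}))}\leq\ell\,[u]^{2}_{\h}$, justified because $(x,y)\in B_{R}(\xi_{i})\times B_{R}(\xi_{i})$ forces $x\in B_{R}(\xi_{i})$) are all correctly handled. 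In short: the proposal is sound; the classical two-step proof is arguably more elementary at each stage, while yours buys a uniform treatment of all subcritical exponents and an explicit decay rate in terms of $\delta_{k}$.
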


\noindent 
At this point, we give the definition of weak solution for the equation 
\begin{equation}\label{ws}
(-\Delta)^{s}u+V(x)u=g \mbox{ in } \R^{N}.
\end{equation}
\begin{defn}
Given $g\in L^{2}(\R^{N})$, we say that $u\in \h$ is a weak solution to (\ref{ws}) if $u$ satisfies
$$
\iint_{\R^{2N}} \frac{(u(x)-u(y))}{|x-y|^{N+2s}}(v(x)-v(y))\, dx dy + \int_{\R^{N}} V(x)uv \, dx = \int_{\R^{N}} gv \, dx
$$
for all $v\in \h$. 
\end{defn}

\noindent
To study solutions to (\ref{P}), we consider the following functional on $\h$ defined by setting
\begin{equation*}
\J(u)= \frac{1}{2}  \Bigl([u]_{\h}^{2}+\int_{\R^{N}} V(x)u^{2}(x) dx \Bigr) - \int_{\R^{N}} F(x,u) \, dx.
\end{equation*}
By $(V1)$ follows that
$$
[u]_{\h}^{2}+\int_{\R^{N}} V(x)|u(x)|^{2} dx
$$
is a norm which is equivalent to the standard norm defined in (\ref{n}). For such reason, we will always write 
$$
\J(u)= \frac{1}{2} ||u||^{2} - \int_{\R^{N}} F(x,u) \, dx.
$$
In particular, by assumptions on $f$, we deduce that $\J \in C^{1}(\h, \R)$.

%Now, we recall the following definitions
%\begin{defn}
%A sequence $\{u_{n}\}\subset \h$ is a Palais-Smale sequence of $\J$ at the level $c\in \R$ if 
%$$
%\J(u_{n}) \rightarrow c \, \mbox{ and } \, \J'(u_{n})\rightarrow 0.
%$$
%\end{defn}

%\begin{defn}
%A sequence $\{u_{n}\}\subset \h$ is called a Cerami sequence of $\J$ at the level $c\in \R$ if 
%$$
%\J(u_{n})\rightarrow c \, \mbox{ and } \, (1+||u_{n}||) ||\J'(u_{n})||_{*} \rightarrow 0. 
%$$
%\end{defn}
 
\noindent
Let us observe that $\J$ possesses a Mountain Pass geometry. More precisely, we have the following result, whose simple proof is omitted.  

\begin{lem}\label{lem2.1}
Under the assumptions $(f1)-(f4)$,
there exist $r>0$ and $v_{0} \in \h$ such that $||v_{0}||>r$ and 
\begin{equation}\label{2.2}
b:= \inf_{||u||=r} \J(u) >\J(0)=0 \geq \J(v_{0}). 
\end{equation}
In particular 
\begin{align*}
&\langle{\J'(u), u}\rangle = ||u||^{2} + o(||u||^{2}) \mbox{ as } ||u||\rightarrow 0, \\
&\J(u)= \frac{1}{2} ||u||^{2} + o(||u||^{2})   \mbox{ as } ||u||\rightarrow 0
\end{align*}
and, as a consequence
\begin{compactenum}[(i)]
\item there exists $\eta>0$ such that if $v$ is a critical point for $\J$, then $||v||\geq \eta$; 
\item for any $c>0$ there exists $\eta_{c}>0$ such that if $\J(v_{n})\rightarrow c$ then $||v_{n}||\geq \eta_{c}$.   
\end{compactenum}
\end{lem}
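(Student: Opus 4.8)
The plan is to analyze $\J$ near the origin and far out separately: the behavior near $0$ produces the ``mountain'', the superlinearity $(f3)$ produces the ``valley'' below zero, and the two quadratic expansions are then read off from the same estimates and immediately yield (i) and (ii).

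First I would record the standard pointwise growth bound coming from $(f1)$ and $(f2)$: for every $\epsilon>0$ there is $C_{\epsilon}>0$ with
$$
|f(x,t)|\leq \epsilon|t|+C_{\epsilon}|t|^{2^{*}_{s}-1}\quad\text{and}\quad |F(x,t)|\leq \tfrac{\epsilon}{2}t^{2}+\tfrac{C_{\epsilon}}{2^{*}_{s}}|t|^{2^{*}_{s}}\qquad\forall\,(x,t)\in\R^{N}\times\R,
$$
the second inequality obtained by integrating the first. Combining this with the continuous embeddings $\h\hookrightarrow L^{2}(\R^{N})$ and $\h\hookrightarrow L^{2^{*}_{s}}(\R^{N})$ of Lemma \ref{compemb} gives
$$
\J(u)\geq \frac{1}{2}||u||^{2}-\frac{\epsilon C^{2}}{2}||u||^{2}-\frac{C_{\epsilon}C^{2^{*}_{s}}}{2^{*}_{s}}||u||^{2^{*}_{s}}.
$$
Fixing $\epsilon$ small so the first two terms dominate yields $\J(u)\geq \frac{1}{4}||u||^{2}-C'||u||^{2^{*}_{s}}$, which, since $2^{*}_{s}>2$, is strictly positive on $||u||=r$ for $r$ small; this gives $b=\inf_{||u||=r}\J(u)>0=\J(0)$. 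The same estimate, divided by $||u||^{2}$ and letting first $||u||\to 0$ and then $\epsilon\to 0$, shows $\int_{\R^{N}}F(x,u)\,dx=o(||u||^{2})$, and the analogous bound on $f$ gives $\int_{\R^{N}}f(x,u)u\,dx=o(||u||^{2})$; these are precisely the two displayed expansions for $\J(u)$ and $\langle\J'(u),u\rangle$.

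Next I would construct $v_{0}$ using $(f3)$. Fixing $\phi\in C^{\infty}_{c}(\R^{N})\setminus\{0\}$ and studying $t\mapsto\J(t\phi)$ as $t\to+\infty$, I would note that since $f$ is continuous and $1$-periodic in $x$ while $\supp\phi$ is compact, $(f3)$ upgrades to a uniform lower bound: for every $M>0$ there is $C_{M}>0$ with $F(x,s)\geq M s^{2}-C_{M}$ for all $s\in\R$ and all $x\in\supp\phi$. Hence
$$
\J(t\phi)\leq t^{2}\Bigl(\tfrac{1}{2}||\phi||^{2}-M|\phi|_{L^{2}(\R^{N})}^{2}\Bigr)+C_{M}|\supp\phi|,
$$
and choosing $M>||\phi||^{2}/(2|\phi|_{L^{2}(\R^{N})}^{2})$ forces $\J(t\phi)\to-\infty$. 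Thus $v_{0}=t\phi$ with $t$ large satisfies $||v_{0}||>r$ and $\J(v_{0})\leq 0$, completing the Mountain Pass geometry.

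Finally, (i) and (ii) follow from the expansions. From $\langle\J'(u),u\rangle=||u||^{2}+o(||u||^{2})$ there is $\eta>0$ with $\langle\J'(u),u\rangle\geq\frac{1}{2}||u||^{2}$ whenever $0<||u||<\eta$; a nontrivial critical point $v$ satisfies $\langle\J'(v),v\rangle=0$, so it cannot lie in this ball and $||v||\geq\eta$. Likewise $\J(u)=\frac{1}{2}||u||^{2}+o(||u||^{2})$ shows $\J(u)\to 0$ as $||u||\to 0$, so any sequence with $\J(v_{n})\to c>0$ must eventually stay outside a small ball, giving the threshold $\eta_{c}$. The only genuinely non-mechanical point is the construction of $v_{0}$, where one must pass from the pointwise statement of $(f3)$ to a uniform lower bound for $F$ on $\supp\phi$; everything else is a routine estimate, which is presumably why the authors omit the proof.
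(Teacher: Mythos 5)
The paper omits the proof of this lemma (``whose simple proof is omitted''), so there is nothing to compare against line by line; your argument is the standard mountain-pass-geometry argument and is essentially correct, except for one step whose stated justification does not actually suffice.

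The weak spot is the claim that $(f3)$ ``upgrades'', by continuity of $f$ and compactness of $\supp\phi$ alone, to the uniform bound $F(x,s)\geq Ms^{2}-C_{M}$ for all $s$ and all $x\in\supp\phi$. Pointwise divergence $F(x,t)/t^{2}\rightarrow+\infty$ of a family of functions continuous in $x$ on a compact set need \emph{not} be uniform in $x$ (the quotient can dip near a point that moves with $t$), so continuity plus compactness proves nothing here. The claim is nevertheless true, but only because of $(f4)$: as in the proof of Lemma \ref{F}, $G\geq 0$ gives $\frac{\partial}{\partial t}\bigl(F(x,t)/t^{2}\bigr)\geq 0$ for $t>0$, so $t\mapsto F(x,t)/t^{2}$ is monotone and a Dini-type argument (the open sets $\{x: F(x,t)/t^{2}>M\}$ increase with $t$ and eventually cover the compact set $[0,1]^{N}$) yields the uniformity. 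Alternatively --- and this is the cleaner, standard route --- skip uniformity entirely: write $\J(t\phi)/t^{2}=\frac{1}{2}\|\phi\|^{2}-\int_{\R^{N}}F(x,t\phi)t^{-2}\,dx$, observe that on $\{\phi\neq 0\}$ the integrand equals $\frac{F(x,t\phi(x))}{|t\phi(x)|^{2}}\,|\phi(x)|^{2}\rightarrow+\infty$ by $(f3)$, and apply Fatou's lemma, which is legitimate precisely because $F\geq 0$ by Lemma \ref{F}. Either repair uses $(f4)$ through Lemma \ref{F}, which your write-up never invokes. The rest of your proof --- the growth bound $|f(x,t)|\leq\epsilon|t|+C_{\epsilon}|t|^{2^{*}_{s}-1}$ (where the intermediate range $\delta\leq|t|\leq R$ needs the $1$-periodicity in $x$ to get a finite sup), the resulting expansions of $\J(u)$ and $\langle\J'(u),u\rangle$ near the origin, and the deduction of (i) and (ii) --- is correct as written, under the understood reading that (i) concerns nontrivial critical points and that the bound in (ii) holds for $n$ large.
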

\noindent

Therefore, by Lemma \ref{lem2.1}, follows that
$$
\Gamma = \{\gamma \in C([0,1], \h) : \gamma(0)=0 \, \mbox{ and } \J(\gamma(1))<0\} \neq \emptyset
$$
and we can define the Mountain Pass level 
\begin{equation}\label{2.3}
c= \inf_{\gamma \in \Gamma} \max_{t\in [0,1]} \J(\gamma(t)).
\end{equation}
Let us point out that, by (\ref{2.2}), $c$ is positive. Then, by using the Ekeland's principle \cite{Ek}, we know that 
there exists a Cerami sequence $\{v_{n}\}$ at the level $c$ for $\J$, that is
$$
\J(v_{n})\rightarrow c \, \mbox{ and } \, (1+||v_{n}||) ||\J'(v_{n})||_{*} \rightarrow 0. 
$$

\noindent
We conclude this section proving that the primitive $F(x,t)$ of $f(x,t)$ is nonnegative. 
\begin{lem}\label{F}
Let us assume that $f$ satisfies $(f1), (f2)$ and $(f4)$. Then $F\geq 0$ in $\R^{N} \times \R$.
\end{lem}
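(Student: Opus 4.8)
The plan is to extract from $(f4)$ the pointwise sign information $G(x,t)\geq 0$, and then transfer it to $F$ through a monotonicity argument for the quotient $F(x,t)/t^{2}$. First I would specialize $(f4)$ to $\theta=0$. Since $F(x,0)=\int_{0}^{0} f(x,\tau)\,d\tau=0$, we have $G(x,0)=f(x,0)\cdot 0-2F(x,0)=0$, so the inequality $G(x,0)\leq \lambda G(x,t)$ together with $\lambda\geq 1>0$ yields $G(x,t)\geq 0$ for every $(x,t)\in\R^{N}\times\R$. This is the crucial step: it converts the rescaling hypothesis $(f4)$ into a pointwise sign condition on $f(x,t)t-2F(x,t)$.

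Next I would fix $x\in\R^{N}$ and study $h(t):=F(x,t)/t^{2}$ for $t\neq 0$. Since $f$ is continuous by $(f1)$, the map $F(x,\cdot)$ is $C^{1}$ with $\partial_{t}F(x,t)=f(x,t)$, and a direct computation gives
$$
h'(t)=\frac{f(x,t)t-2F(x,t)}{t^{3}}=\frac{G(x,t)}{t^{3}}.
$$
By the previous step $G\geq 0$, so $h'(t)\geq 0$ for $t>0$ and $h'(t)\leq 0$ for $t<0$; that is, $h$ is nondecreasing on $(0,\infty)$ and nonincreasing on $(-\infty,0)$.

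Finally I would pin down the sign using the behaviour near the origin provided by $(f2)$. Since $f(x,t)=o(t)$ uniformly in $x$ as $|t|\to 0$, integrating yields $F(x,t)=o(t^{2})$, hence $h(t)\to 0$ as $t\to 0$. Combining this with the monotonicity of $h$ forces $h\geq 0$ on both half-lines: on $(0,\infty)$ the value $0$ is the limit at $0^{+}$ of a nondecreasing function, so it is a lower bound; on $(-\infty,0)$ the value $0$ is the limit at $0^{-}$ of a nonincreasing function, hence again a lower bound. Therefore $F(x,t)=t^{2}h(t)\geq 0$ for $t\neq 0$, while $F(x,0)=0$, proving $F\geq 0$ on $\R^{N}\times\R$. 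I expect no serious obstacle here; the only points needing a line of care are the differentiability of $F$ (immediate from the continuity of $f$) and the limit $h(t)\to 0$, which is exactly what $(f2)$ delivers.
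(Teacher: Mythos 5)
Your proposal is correct and follows essentially the same route as the paper: deduce $G\geq 0$ from $(f4)$ (the paper states this without the $\theta=0$ detail you supply), observe that $t\mapsto F(x,t)/t^{2}$ is monotone via the sign of $G(x,t)/t^{3}$, and conclude using the limit $F(x,t)/t^{2}\to 0$ from $(f2)$. Your explicit treatment of the negative half-line is a slightly more careful version of the paper's ``analogously''.
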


\begin{proof}
Firstly we observe that by $(f4)$ follows
$$
G(x, t) = f(x,t)t - 2F(x,t)\geq 0 \mbox{ for all } (x, t)\in \R^{N}\times \R. 
$$ 
Fix $t>0$. For $x\in \R^{N}$ let us compute the derivative of $\displaystyle{\frac{F(x,t)}{t^{2}}}$ with respect to $t$:
\begin{equation}\label{2.4}
\frac{\partial}{\partial t} \Bigl(\frac{F(x,t)}{t^{2}} \Bigr)= \frac{f(x,t) \,t^{2} - 2t\,F(x,t) }{t^{4}} \geq 0.
\end{equation} 
Moreover by $(f2)$ we get
\begin{equation}\label{2.5}
\lim_{t\rightarrow 0^{+}} \frac{F(x,t)}{t^{2}}=0. 
\end{equation}
Putting together (\ref{2.4}) and (\ref{2.5}) we deduce that $F(x,t)\geq0$ for all $(x, t)\in \R^{N}\times [0, +\infty)$. Analogously, we obtain that $F(x,t)\geq0$ for all $(x, t)\in \R^{N}\times (-\infty, 0]$. 

\end{proof}

\section{\bf Existence of ground states to (\ref{P})}

\noindent
In this Section we give the proof of the Theorem \ref{thm1}. 
We start proving the following Lemma, inspired by \cite{JT, Liu}, which guarantees the boundedness of Cerami sequences for the functional $\J$.  

\begin{lem}\label{lem2.3}
Assume that (V1), (f1), (f2), (f3) and (f4) hold true. Let $c\in \R$. Then any Cerami sequence for $\J$ is bounded. 
\end{lem}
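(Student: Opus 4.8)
The plan is to argue by contradiction. Suppose $\{v_n\}$ is a Cerami sequence with $\J(v_n)\to c$ and $\|v_n\|\to\infty$ along a subsequence, and set $w_n := v_n/\|v_n\|$, so that $\|w_n\|=1$. Two elementary consequences of the Cerami condition drive the argument. First, since $\J(v_n)-\frac12\langle\J'(v_n),v_n\rangle = \frac12\int_{\R^N}G(x,v_n)\,dx$ and $|\langle\J'(v_n),v_n\rangle|\le\|\J'(v_n)\|_*\|v_n\|\to0$ (this is exactly where the Cerami condition, rather than a mere Palais--Smale condition, is needed), we get $\int_{\R^N}G(x,v_n)\,dx\to 2c$. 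Second, because $\J(v_n)\to c$ is bounded, dividing by $\|v_n\|^2\to\infty$ yields
$$\int_{\R^N}\frac{F(x,v_n)}{\|v_n\|^2}\,dx \;=\;\frac12-\frac{\J(v_n)}{\|v_n\|^2}\;\longrightarrow\;\frac12 .$$
Up to a subsequence $w_n\rightharpoonup w$ in $\h$, $w_n\to w$ in $L^q_{loc}$ for $q\in[2,2^{*}_{s})$ and a.e. by Lemma \ref{compemb}. I would then split into the vanishing/non-vanishing alternative of Lemma \ref{lion} applied to the bounded sequence $\{w_n\}$.

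In the non-vanishing case there are $\delta>0$ and points $y_n$, which I would replace by their nearest lattice points in $\Z^N$, such that $\int_{B_{\rho}(y_n)}|w_n|^2\,dx\ge\delta$ for a fixed radius $\rho$. Setting $\hat w_n(x):=w_n(x+y_n)$ and $\hat v_n(x):=v_n(x+y_n)=\|v_n\|\,\hat w_n(x)$, the sequence $\{\hat w_n\}$ is still bounded and, up to a subsequence, $\hat w_n\rightharpoonup\hat w$ with $\hat w\neq0$ on a set $\Omega$ of positive measure (the local $L^2$ mass survives by the compact embedding of Lemma \ref{compemb}). Here I would use that $f$, hence $F$, is $1$-periodic in $x$ by $(f1)$ and that $y_n\in\Z^N$, so that the change of variables $x\mapsto x+y_n$ gives $\int_{\R^N}F(x,v_n)\,dx=\int_{\R^N}F(x,\hat v_n)\,dx$; note that the non-periodicity of $V$ is harmless because $V$ enters only through the scalar $\|v_n\|$. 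On $\Omega$ one has $|\hat v_n|\to\infty$, so $(f3)$ forces $F(x,\hat v_n)/\hat v_n^2\to+\infty$ while $\hat w_n^2\to\hat w^2>0$; since $F\ge0$ by Lemma \ref{F} and $F(x,\hat v_n)/\|v_n\|^2=(F(x,\hat v_n)/\hat v_n^2)\,\hat w_n^2$, Fatou's lemma forces $\int_{\R^N}F(x,\hat v_n)/\|v_n\|^2\,dx\to+\infty$, contradicting the limit $\tfrac12$ above.

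In the vanishing case, Lemma \ref{lion} gives $w_n\to0$ in $L^q(\R^N)$ for every $q\in(2,2^{*}_{s})$. Combining $(f1)$--$(f2)$ into the standard bound $|F(x,t)|\le\varepsilon(|t|^2+|t|^{2^{*}_{s}})+C_\varepsilon|t|^p$ with $p\in(2,2^{*}_{s})$, and using that $|w_n|_{L^2(\R^N)}$ and $|w_n|_{L^{2^{*}_{s}}(\R^N)}$ stay bounded while $|w_n|_{L^p(\R^N)}\to0$, I would show that $\int_{\R^N}F(x,Rw_n)\,dx\to0$, hence $\J(Rw_n)\to\frac12 R^2$ for every fixed $R>0$. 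Choosing $t_n\in[0,1]$ with $\J(t_nv_n)=\max_{t\in[0,1]}\J(tv_n)$ and noting that $Rw_n=(R/\|v_n\|)v_n$ with $R/\|v_n\|\in(0,1)$ for large $n$, I obtain $\J(t_nv_n)\ge\J(Rw_n)\to\frac12 R^2$ for all $R$, so $\J(t_nv_n)\to+\infty$. Since $\J(0)=0$ and $\J(v_n)\to c$ are bounded, the maximum is interior for large $n$, whence $\langle\J'(t_nv_n),t_nv_n\rangle=0$ and $\J(t_nv_n)=\frac12\int_{\R^N}G(x,t_nv_n)\,dx$. Finally $(f4)$ with $\theta=t_n$ gives the pointwise inequality $G(x,t_nv_n)\le\lambda\,G(x,v_n)$, so $\J(t_nv_n)\le\frac{\lambda}{2}\int_{\R^N}G(x,v_n)\,dx\to\lambda c$, contradicting $\J(t_nv_n)\to+\infty$.

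In either case we reach a contradiction, so $\{v_n\}$ must be bounded. I expect the main obstacle to be the vanishing case: constructing the right auxiliary sequence $t_nv_n$ by maximizing $\J$ along the ray $t\mapsto tv_n$ and then invoking $(f4)$ to keep $\J(t_nv_n)$ bounded. This is precisely the step that replaces the Ambrosetti--Rabinowitz condition, and it requires the rescaling $w_n=v_n/\|v_n\|$ and the concentration--compactness alternative to be orchestrated correctly. By contrast, the non-vanishing case is a fairly direct Fatou argument once the $\Z^N$-translation, together with the periodicity in $(f1)$, is used to produce a nontrivial limiting profile.
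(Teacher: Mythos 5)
Your proposal is correct and follows essentially the same route as the paper: contradiction via the normalization $w_n=v_n/\|v_n\|$, the vanishing/non-vanishing dichotomy, the $\Z^N$-translation plus periodicity and $(f3)$ with Fatou in the non-vanishing case, and the maximizer $t_n$ along the ray together with $(f4)$ in the vanishing case. The only cosmetic differences (nearest lattice point instead of the $4^N$ counting argument, a fixed $R$ instead of $2\sqrt{j}$) do not change the argument.
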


\begin{proof}
Let $\{v_{n}\}$ be a Cerami sequence for $\J$.  \\
Assume by contradiction that $\{v_{n}\}$ is unbounded. Then going to a subsequence we may assume that
\begin{equation}\label{assurdo}
\J(v_{n})\rightarrow c, \quad ||v_{n}||\rightarrow \infty, \quad ||\J'(v_{n})||_{*}||v_{n}||\rightarrow 0. 
\end{equation}

\noindent
Now we define set $\displaystyle{w_{n}=\frac{v_{n}}{||v_{n}||}}$. Clearly $w_{n}$ is bounded in $\h$ and has unitary norm. 
We claim to prove that $\{w_{n}\}$ vanishes, i.e. it holds 
\begin{equation}\label{2.8}
\lim_{n\rightarrow \infty} \sup_{z\in \R^{N}} \int_{\B_{2}(z)} |w_{n}|^{2} dx=0. 
\end{equation} 
If (\ref{2.8}) does not hold, there exists $\delta>0$ such that
$$
\sup_{z\in \R^{N}} \int_{\B_{2}(z)} |w_{n}|^{2} dx \geq \delta >0. 
$$
As a consequence, we can choose $\{z_{n}\}\subset \R^{N}$ such that
$$
\int_{\B_{2}(z_{n})} |w_{n}|^{2} dx\geq \frac{\delta}{2}. 
$$
Since the number of points in $\Z^{N}\cap \B_{2}(z_{n})$ is less than $4^{N}$, then there exists $\xi_{n}\in \Z^{N}\cap \B_{2}(z_{n})$ such that 
\begin{equation}\label{2.9}
\int_{\B_{2}(\xi_{n})} |w_{n}|^{2} dx \geq K>0,
\end{equation}
where $K:=\delta 2^{-(2N+1)}$. Now we set $\tilde{w}_{n}= w_{n}(\cdot +\xi_{n})$. By using $(V1)$ and that $w_{n}$ has unitary norm, we deduce 
\begin{align*}
||\tilde{w}_{n}||^{2}&= [\tilde{w}_{n}]^{2}_{\h}+\int_{\R^{N}}V(x)|\tilde{w}_{n}|^{2} dx \\
&\leq  [\tilde{w}_{n}]^{2}_{\h} + \beta \int_{\R^{N}}V(x) |\tilde{w}_{n}(x)|^{2} dx\\
&= [w_{n}]^{2}_{\h}+ \beta \int_{\R^{N}} |w_{n}(x)|^{2} dx\\
&\leq \frac{\beta}{\alpha} \Bigl([w_{n}]^{2}_{\h}+ \alpha \int_{\R^{N}} |w_{n}(x)|^{2} dx  \Bigr) \\
&\leq \frac{\beta}{\alpha} \Bigl( [w_{n}]^{2}_{\h}+\int_{\R^{N}}V(x)|w_{n}|^{2} dx \Bigr) \\
&= \frac{\beta}{\alpha},
\end{align*}
that is $\tilde{w}_{n}$ is bounded. By Lemma \ref{compemb}, we may assume, going if necessary to a
subsequence, that 
\begin{equation}\begin{split}\label{limits}
&\tilde{w}_{n}\rightarrow \tilde{w} \mbox{ in } L^{2}_{loc}(\R^{N}), \\
&\tilde{w}_{n}(x) \rightarrow \tilde{w}(x) \mbox{ a.e. } x\in \R^{N}.  
\end{split}
\end{equation}
Then, by (\ref{2.9}) and (\ref{limits}) we get 
\begin{equation}\label{2.10}
\int_{\B_{2}(0)} |\tilde{w}|^{2} dx = \lim_{n\rightarrow \infty} \int_{\B_{2}(0)} |\tilde{w}_{n}|^{2} dx = \lim_{n\rightarrow \infty} \int_{\B_{2}(\xi_{n})} |w_{n}|^{2} dx \geq K>0,
\end{equation}
which implies $\tilde{w}\neq 0$. \\
Let $\displaystyle{\tilde{v}_{n} = ||v_{n}|| \tilde{w}_{n}}$. Since $\tilde{w}\neq 0$ 
 the set $A:= \{x\in \R^{N} : \tilde{w} \neq 0\}$ has positive Lebesgue measure and
$|\tilde{v}_{n}(x)|\rightarrow +\infty$.
In particular, by $(f3)$ we get
\begin{equation}\label{2.11}
\frac{F(x, \tilde{v}_{n}(x))}{|\tilde{v}_{n}(x)|^{2}} |\tilde{w}_{n}(x)|^{2} \rightarrow +\infty. 
\end{equation}
Let us observe that $f(x,t)$ is $1$-periodic with respect to $x$, so
\begin{equation}\label{2.12}
\int_{\R^{N}} F(x, v_{n}) \, dx = \int_{\R^{N}} F(x, \tilde{v}_{n}) \, dx. 
\end{equation}
By (\ref{assurdo}), (\ref{2.11}), (\ref{2.12}) and Lemma \ref{F} follow easily that
\begin{align}\label{2.13}
\frac{1}{2}- \frac{c+ o(1)}{||v_{n}||_{e}^{2}} &=\int_{\R^{N}} \frac{F(x, v_{n})}{||v_{n}||_{e}^{2}} dx \nonumber \\
&=  \int_{\R^{N}} \frac{F(x, \tilde{v}_{n})}{||v_{n}||_{e}^{2}} dx \nonumber \\
&\geq \int_{A} \frac{F(x, \tilde{v}_{n})}{|\tilde{v}_{n}|^{2}}|\tilde{w}_{n}|^{2} dx \rightarrow \infty
\end{align}
which gives a contradiction. Therefore (\ref{2.8}) holds true. In particular, by Lemma \ref{lion},  we get 
\begin{equation*}\label{2.14}
w_{n} \rightarrow 0 \, \mbox{ in } \,L^{q}(\R^{N}) \quad \forall q\in (2, 2^{*}_{s}). 
\end{equation*}
Now, let $\rho>0$ be a real number. By $(f1)-(f3)$ and Lemma \ref{F} follow that for any $\varepsilon >0$ there exists $C_{\varepsilon}>0$ such that
\begin{equation}\label{2.15}
0\leq F(x, \rho t) \leq \varepsilon (|t|^{2} + |t|^{2^{*}_{s}}) + C_{\varepsilon} |t|^{q}. 
\end{equation}
Since $||w_{n}||=1$, by Sobolev inequality we have that there exists $\tilde{c}>0$ such that 
\begin{equation}\label{2.15*}
|w_{n}|_{L^{2}(\R^{N})}^{2} +|w_{n}|_{L^{2^{*}_{s}}(\R^{N})}^{2^{*}_{s}}\leq \tilde{c}.
\end{equation}
Taking into account (\ref{2.15}) and (\ref{2.15*}) we have 
\begin{align*}
\limsup_{n\rightarrow \infty} \int_{\R^{N}} F(x, \rho w_{n}) \, dx &\leq \limsup_{n\rightarrow \infty}  [ \varepsilon (|w_{n}|_{L^{2}(\R^{N})}^{2} +|w_{n}|_{L^{2^{*}_{s}}(\R^{N})}^{2^{*}_{s}}) + C_{\varepsilon} (|w_{n}|_{L^{q}(\R^{N})}^{q}] \\
&\leq \varepsilon \tilde{c}
\end{align*}
and by the arbitrariness of $\varepsilon$  we get
\begin{equation}\label{2.16}
\lim_{n\rightarrow \infty} \int_{\R^{N}} F(x, \rho w_{n}) \, dx =0.
\end{equation}
Now, let $\{t_{n}\}\subset [0,1]$ be a sequence such that
\begin{equation}\label{2.17}
\J(t_{n} v_{n}) := \max_{t\in [0,1]} \J(tv_{n}). 
\end{equation}
By using (\ref{assurdo}) we can see that $\displaystyle{2\sqrt{j} ||v_{n}||^{-1} \in (0,1)}$ for $n$ sufficiently large and $j\in \N$. 
Taking $\rho= 2\sqrt{j}$ in (\ref{2.16}), we obtain
\begin{align*}
\J(t_{n}v_{n}) &\geq \J(2\sqrt{j} \, w_{n}) \\
&= 2j - \int_{\R^{N}} F(x, 2\sqrt{j} \, w_{n}) \, dx \geq j
\end{align*}
for $n$ large enough and for all $j\in \N$. 
Then 
\begin{equation}\label{Jinfty}
\J(t_{n}v_{n})\rightarrow +\infty.
\end{equation} 
Since $\J(0)=0$ and $\J(v_{n})\rightarrow c$ we deduce that $t_{n} \in (0,1)$.   By (\ref{2.17}) we get 
\begin{align}\label{J'0}
\langle{\J'(t_{n}v_{n}), t_{n}v_{n}}\rangle = t_{n} \frac{d}{dt} \J(tv_{n})\Bigr|_{t=t_{n}}=0.
\end{align} 
Indeed, putting together (\ref{assurdo}), (\ref{J'0}) and $(f4)$, we can see
\begin{align*}
\frac{2}{\lambda} \J(t_{n}v_{n}) &= \frac{1}{\lambda} \Bigl(2\J(t_{n}v_{n})  - \langle{\J'(t_{n}v_{n}), t_{n}v_{n}}\rangle\Bigr) \\
&=\frac{1}{\lambda} \int_{\R^{N}} \Bigl( f(x, t_{n}v_{n}) t_{n}v_{n} - 2F(x, t_{n}v_{n})\Bigr)\, dx \\
&= \frac{1}{\lambda} \int_{\R^{N}} G(x, t_{n}v_{n}) dx \\
&\leq \int_{\R^{N}} G(x, t_{n}v_{n}) dx \\
&= \int_{\R^{N}} \Bigl(f(x, v_{n}) v_{n} - 2 F(x, v_{n})\Bigr)\, dx \\
&= 2\J(v_{n})  - \langle{\J'(v_{n}), v_{n}}\rangle \rightarrow 2c
\end{align*}
which is incompatible with (\ref{Jinfty}). Thus $\{v_{n}\}$ is bounded. 

\end{proof}

\begin{remark}
Let us observe that the conclusion of Lemma \ref{lem2.3} holds true if we consider $\displaystyle{f(x,t)=b(x)f(t)}$ with $b\in C(\R^{N})$ and $0<b_{0}\leq b(x)\leq b_{1}<\infty$ for any $x\in \R^{N}$. In fact, in this case, the contradiction in (\ref{2.13}) follows by replacing (\ref{2.12}) by 
$$
\int_{\R^{N}} b(x) F(v_{n}) \, dx \geq \frac{b_{0}}{b_{1}} \int_{\R^{N}} b(x) F(\tilde{v}_{n}) \, dx. 
$$ 
\end{remark}
\medskip

\noindent
Now we prove that, up to a subsequence, our bounded Cerami sequence $\{u_{n}\}$ converges weakly to a non-trivial critical point for $\J$.

\begin{proof}[Proof of Theorem \ref{thm1}]
Let $c$ be the Mountain Pass level defined in (\ref{2.3}). We know that $c>0$ and that there exists a Cerami sequence $\{u_{n}\}$ for $\J$, which is bounded in $\h$ by Lemma \ref{lem2.3}. \\
We define
\begin{equation*}\label{2.18}
\delta:= \lim_{n\rightarrow \infty} \sup_{z\in \R^{N}} \int_{\B_{2}(z)} |u_{n}|^{2} dx. 
\end{equation*}
If $\delta=0$, then by Lemma \ref{lion} we have that $u_{n}\rightarrow 0$ in $L^{q}(\R^{N})$ for all $q\in (2, 2^{*}_{s})$. Analogously to (\ref{2.16}) we can see
\begin{equation*}\begin{split}\label{2.19}
& \lim_{n\rightarrow \infty} \int_{\R^{N}} F(x, u_{n}) \, dx =0, \\
& \lim_{n\rightarrow \infty} \int_{\R^{N}} f(x, u_{n}) u_{n} \, dx =0.
\end{split}
\end{equation*}
Then we deduce 
\begin{equation*}\label{2.7}
0=\lim_{n\rightarrow \infty} \int_{\R^{N}} \Bigl(\frac{1}{2} f(x, u_{n})v_{n} - F(x, u_{n})\Bigr)\, dx = \lim_{n\rightarrow \infty} \Bigl( \J(u_{n}) -\frac{1}{2} \langle{\J'(u_{n}), u_{n}}\rangle \Bigr)=c
\end{equation*}
which is impossible because of $c>0$. \\
Thus $\delta>0$. As for (\ref{2.10}), we can find a sequence $\{\xi_{n}\}\subset \Z^{N}$ and a positive constant $K$ such that
\begin{equation}\label{2.20}
\int_{\B_{2}(0)} |w_{n}|^{2} dx = \int_{\B_{2}(\xi_{n})} |u_{n}|^{2} dx >K
\end{equation}
where $w_{n}= u_{n}(\cdot + \xi_{n})$. Let us observe that $||w_{n}||= ||u_{n}||$, so $\{w_{n}\}$ is bounded. By Lemma \ref{compemb}, we can assume, up to a subsequence, that
\begin{align*}
&w_{n} \rightharpoonup w \, \mbox{ in } \, \h, \\ 
&w_{n} \rightarrow w \, \mbox{ in } \, L^{2}_{loc}(\R^{N})
\end{align*}
and by using (\ref{2.20}) we have $w\neq 0$. Since (\ref{P}) is $\Z^{N}$ invariant, $\{w_{n}\}$ is a Cerami sequence for $\J$. \\
Then, 
\begin{equation*}
\langle{\J'(w), \phi}\rangle = \lim_{n\rightarrow \infty} \langle{\J'(w_{n}), \phi}\rangle=0
\end{equation*}
for all $\phi \in C^{\infty}_{c}(\R^{N})$, that is $\J'(w)=0$ and $w$ is a nontrivial solution to (\ref{P}). \\
Now we want to prove that (\ref{P}) has a ground state. \\
Let
\begin{equation*}\label{2.21}
m= \inf\{ \J(v) : v\neq 0 \, \mbox{ and } \, \J'(v)=0\}
\end{equation*}
and suppose that $v$ is an arbitrary critical point for $\J$. By $(f4)$ we have
\begin{equation*}\label{2.22}
G(x,t) \geq 0 \quad \forall (x, t)\in \R^{N}\times \R
\end{equation*}
which implies that
\begin{equation*}\label{2.23}
\J(v)= \J(v)- \frac{1}{2} \langle{\J'(v), v}\rangle = \frac{1}{2} \int_{\R^{N}} G(x,v) \, dx \geq 0. 
\end{equation*}
Therefore $m\geq 0$.
Now, let $\{u_{n}\}$ be a sequence of nontrivial critical points for $\J$ such that $\J(u_{n})\rightarrow m$. By Lemma \ref{lem2.1} we have that for some $\eta >0$
\begin{equation}\label{2.24}
||u_{n}||\geq \eta. 
\end{equation}
Taking into account that $u_{n}$ is a critical point for $\J$ we have 
$$
(1+ ||u_{n}||)||\J'(u_{n})||_{*} \rightarrow 0.
$$  
Therefore $\{u_{n}\}$ is a Cerami sequence at the level $m$ and, by Lemma \ref{lem2.3}, $\{u_{n}\}$ is bounded in $\h$. \\
Let 
\begin{equation*}
\delta:= \lim_{n\rightarrow \infty} \sup_{z\in \R^{N}} \int_{\B_{2}(z)} |u_{n}|^{2} dx. 
\end{equation*}   
As before, if $\delta=0$ then
$$
\lim_{n\rightarrow \infty} \int_{\R^{N}} f(x, u_{n})u_{n} \, dx =0,
$$ 
from which 
\begin{equation}\label{2.25}
||u_{n}||^{2} = \langle{\J'(u_{n}), u_{n}}\rangle + \int_{\R^{N}} f(x, u_{n}) u_{n}\, dx \rightarrow 0,
\end{equation}
and this is impossible because of (\ref{2.24}). 
Thus $\delta>0$. The same argument made before proves that if we denote by $w_{n}(x)= u_{n}(x+\xi_{n})$ we deduce that 
\begin{equation}\label{Jm}
\J'(w_{n})=0, \, \, \J(w_{n})= \J(u_{n})\rightarrow m
\end{equation}
and $w_{n}$ weakly converges to a nonzero critical point $w$ for $\J$. \\
Thus, by (\ref{Jm}), $G\geq 0$ and Fatou Lemma follow that
\begin{equation}\begin{split}\label{2.26}
\J(w) &= \J(w) - \frac{1}{2} \langle{\J'(w), w}\rangle \\
&= \frac{1}{2}\int_{\R^{N}} G(x, w) \, dx\\
&\leq \liminf_{n\rightarrow \infty} \frac{1}{2} \int_{\R^{N}} G(x, w_{n}) \, dx\\
&= \liminf_{n\rightarrow \infty} \Bigl(\J(w_{n}) -\frac{1}{2} \langle{\J'(w_{n}), w_{n}}\rangle \Bigr) =m.
\end{split}
\end{equation}
Hence $w$ is a nontrivial critical point for $\J$ such that $\J(w)=m$. This concludes the proof of the Theorem.

\end{proof}

\section{\bf Proof of Theorem \ref{thm2}}
\noindent
In the last section we give the proof of the Theorem \ref{thm2}. We proceed as in \cite{JT, Liu}.
The main ingredient of our proof is the following result which takes advantage of the Pohozaev identity proved in \cite{ChangWang}:
\begin{prop}\label{prop3.1}
Let $u\in \h$ be a nontrivial critical point for 
\begin{equation*}
\I(u)=\frac{1}{2} [u]_{\h}^{2} -\int_{\R^{N}} G(u) \, dx. 
\end{equation*}
Then there exists $\gamma \in C([0,1], \h)$ such that $\gamma(0)=0$, $\I(\gamma(1))<0$, $u\in \gamma([0,1])$ and
$$
\max_{t\in [0,1]} \I(\gamma(t)) = \I(u). 
$$
\end{prop}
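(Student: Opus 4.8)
The plan is to construct the path by dilations of $u$, and to use the Pohozaev identity as the device that forces the maximum of $\I$ along this path to be attained exactly at $u$. First I would record the only non-elementary input. Since $u$ is a nontrivial critical point of $\I$, it is a weak solution of the autonomous equation $(-\Delta)^s u = G'(u)$, so the Pohozaev identity established in \cite{ChangWang} applies and gives
\begin{equation*}
\frac{N-2s}{2}[u]_{\h}^2 = N \int_{\R^N} G(u)\,dx.
\end{equation*}
Because $u\neq 0$ forces $[u]_{\h}>0$, this already yields $\int_{\R^N}G(u)\,dx=\frac{N-2s}{2N}[u]_{\h}^2>0$. Setting $a:=\tfrac12[u]_{\h}^2>0$ and $b:=\int_{\R^N}G(u)\,dx>0$, the identity reads $bN=(N-2s)a$, a relation I will use below.

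Next I would introduce the dilation family. For $t>0$ put $u_t(x):=u(x/t)$ and $u_0:=0$. A change of variables gives the two scaling laws $[u_t]_{\h}^2=t^{N-2s}[u]_{\h}^2$ and $\int_{\R^N}G(u_t)\,dx=t^N\int_{\R^N}G(u)\,dx$, whence
\begin{equation*}
\I(u_t)=\frac12 t^{N-2s}[u]_{\h}^2 - t^N\int_{\R^N}G(u)\,dx = a\,t^{N-2s}-b\,t^N=:\phi(t).
\end{equation*}
Since $0<N-2s<N$, one has $\phi(t)>0$ for small $t>0$, $\phi(t)\to-\infty$ as $t\to\infty$, and $\phi$ has a single critical point $t_*$ on $(0,\infty)$ determined by $t_*^{2s}=\frac{a(N-2s)}{bN}$. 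The Pohozaev relation $bN=(N-2s)a$ forces $t_*=1$, so $\phi$ attains its global maximum precisely at $t=1$, with $\phi(1)=a-b=\frac{2s}{N}a=\I(u)$.

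Then I would turn $\{u_t\}$ into the desired path. The map $t\mapsto u_t$ is continuous from $[0,\infty)$ into $\h$: for $t>0$ this is continuity of dilations together with the seminorm scaling, and as $t\to0^+$ one has $[u_t]_{\h}^2=t^{N-2s}[u]_{\h}^2\to0$ and $|u_t|_{L^2(\R^N)}^2=t^N|u|_{L^2(\R^N)}^2\to0$, so $u_t\to0=u_0$ in $\h$. Choosing $T>1$ large enough that $\phi(T)<0$ and setting $\gamma(\tau):=u_{T\tau}$ for $\tau\in[0,1]$, I obtain $\gamma(0)=0$, $\I(\gamma(1))=\phi(T)<0$, $u=u_1=\gamma(1/T)\in\gamma([0,1])$, and, since $\phi$ is maximized on $[0,T]$ at the interior point $t=1<T$,
\begin{equation*}
\max_{\tau\in[0,1]}\I(\gamma(\tau))=\max_{t\in[0,T]}\phi(t)=\phi(1)=\I(u),
\end{equation*}
which is exactly the claim.

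The main obstacle is conceptual rather than computational: it is the Pohozaev identity that pins the abstract maximizer $t_*$ to $t=1$, so that the peak of the dilation path lands on $u$ itself; without it the path would be maximized at $u_{t_*}$ with $t_*\neq1$ in general. The one genuinely delicate technical point is the $\h$-continuity of $\tau\mapsto u_{T\tau}$ at the endpoint $\tau=0$, which I would verify directly through the explicit scaling of the seminorm and of the $L^2$-norm, rather than appealing to an abstract density argument.
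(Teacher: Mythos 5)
Your proposal is correct and follows essentially the same route as the paper: the dilation family $u(x/t)$, the Pohozaev identity from \cite{ChangWang} to pin the maximum of $t\mapsto \I(u(\cdot/t))$ at $t=1$, and a rescaled parametrization ending at a point where $\I<0$. You merely make explicit two details the paper leaves implicit (that the unique critical point of $\phi$ is forced to be $t=1$, and the $\h$-continuity of the path at the endpoint $t=0$), which is a harmless refinement of the same argument.
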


\begin{proof}
Let $u\in \h$ be a nontrivial critical point for $\I$.
We set for $t>0$
\begin{equation*}\label{3.3}
u^{t}(x) = u\Bigl(\frac{x}{t}\Bigr). 
\end{equation*} 
By using Pohozaev identity in \cite{ChangWang}, we know  
$$
\frac{N-2s}{2} [u]^{2}_{\h}=N\int_{\R^{N}}G(u) dx,
$$
so we can see that 
\begin{align*}
\I(u^{t})&= \frac{t^{N-2s}}{2} [u]_{\h}^{2}- t^{N} \int_{\R^{N}} G(u) \,dx \\
&= \Bigl(\frac{1}{2} t^{N-2s} - \frac{N-2s}{2N}t^{N}\Bigr) [u]_{\h}^{2}.  
\end{align*}
Therefore we can deduce that $\displaystyle{\max_{t>0} \I(u^{t})= \I(u)}$, $\I(u^{t})\rightarrow -\infty$ as $t\rightarrow \infty$, and
$$
||u^{t}||_{\h}^{2}=t^{N-2s}[u]_{\h}^{2}+ t^{N} |u|_{L^{2}(\R^{N})}^{2}\rightarrow 0 \mbox{ as } t\rightarrow 0. 
$$ 
Choosing $\alpha>1$ such that $\I(u^{\alpha})<0$ and setting 
\begin{equation*}
\gamma(t)= 
\left\{
\begin{array}{ll}
u^{\alpha t} &\mbox{ for } t\in (0,1]\\
0 &\mbox{ for } t=0.   
\end{array}
\right.
\end{equation*}
we get the conclusion. 

\end{proof}

\noindent
Now we consider the following functionals 
\begin{equation*}
\J(u)= \frac{1}{2} ||u||^{2} - \int_{\R^{N}} b(x) F(u) \, dx
\end{equation*}
and
\begin{equation*}
\J_{\infty}(u)= \frac{1}{2} \Bigl([u]_{\h}^{2}+\int_{\R^{N}} V_{\infty} u^{2}(x) dx\Bigr) - \int_{\R^{N}} b_{\infty} F(u) \, dx. 
\end{equation*}
By $(V3)$ follows that 
\begin{equation}\label{3.5}
\J(u)<\J_{\infty}(u) \mbox{ for any } u\in \h\setminus \{0\}. 
\end{equation}
Taking into account of the Proposition \ref{prop3.1}, we can prove the following
\begin{lem}\label{JT}
Let $N> 2s$. Assume that $V(x)$ satisfies $(V1)$ and $(V3)$ and $f$ satisfies $(f1)-(f4)$.
Then $\J$ has a nontrivial critical point. 
\end{lem}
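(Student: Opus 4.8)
The plan is to use the energy comparison method of \cite{JT}: produce a bounded Cerami sequence for $\J$ at the mountain pass level $c$, establish the strict inequality $c<m_{\infty}$ of (\ref{minfty}), and then show that this strict inequality forces the weak limit of the sequence to be a nontrivial critical point of $\J$. The mountain pass geometry of $\J$ holds exactly as in Lemma \ref{lem2.1} (only $(f1)-(f4)$ and (\ref{3.11}) were used there), so $\Gamma\neq\emptyset$ and $c>0$ is well defined by (\ref{2.3}); Ekeland's principle then yields a Cerami sequence $\{u_{n}\}$ at level $c$, which is bounded by the version of Lemma \ref{lem2.3} recorded in the Remark, valid for $f(x,t)=b(x)f(t)$ with $0<b_{\infty}\le b(x)\le \bar b$.

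First I would prove $c<m_{\infty}$, the conceptual core. Since the problem at infinity $(-\Delta)^{s}u+V_{\infty}u=b_{\infty}f(u)$ is autonomous, by \cite{ChangWang} it admits a least energy solution $w$, i.e.\ a nontrivial critical point of $\J_{\infty}$ with $\J_{\infty}(w)=m_{\infty}$, satisfying the Pohozaev identity. Writing $\J_{\infty}$ in the form $\I$ of Proposition \ref{prop3.1} with nonlinearity primitive $G(t)=b_{\infty}F(t)-\frac{V_{\infty}}{2}t^{2}$, I would apply Proposition \ref{prop3.1} to $w$ to get a path $\gamma\in C([0,1],\h)$ with $\gamma(0)=0$, $w\in\gamma([0,1])$, $\J_{\infty}(\gamma(1))<0$ and $\max_{t\in[0,1]}\J_{\infty}(\gamma(t))=m_{\infty}$. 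Because $\gamma(t)\neq 0$ for every $t>0$, the strict inequality (\ref{3.5}) gives $\J(\gamma(t))<\J_{\infty}(\gamma(t))$ on $(0,1]$; in particular $\J(\gamma(1))<0$, so $\gamma\in\Gamma$. Since $\J(\gamma(0))=0$, the maximum of $\J$ along $\gamma$ is attained at some $t_{*}\in(0,1]$ with $\gamma(t_{*})\neq 0$, whence $c\le \max_{t\in[0,1]}\J(\gamma(t))=\J(\gamma(t_{*}))<\J_{\infty}(\gamma(t_{*}))\le m_{\infty}$, which is (\ref{minfty}).

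Next I would exploit $c<m_{\infty}$ to recover compactness. Up to a subsequence $u_{n}\rightharpoonup u$ in $\h$, and passing to the limit in the quadratic form together with a.e.\ convergence and the growth bound as in (\ref{2.15}) shows $\J'(u)=0$. It remains to exclude $u=0$. If $u=0$, then $\{u_{n}\}$ cannot vanish: otherwise Lemma \ref{lion} gives $\int_{\R^{N}}b(x)f(u_{n})u_{n}\,dx\to0$, so $\|u_{n}\|^{2}=\langle \J'(u_{n}),u_{n}\rangle+\int_{\R^{N}}b(x)f(u_{n})u_{n}\,dx\to0$ and hence $\J(u_{n})\to0$, contradicting $c>0$. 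Thus there are $\xi_{n}\in\R^{N}$ and $K>0$ with $\int_{\B_{2}(\xi_{n})}|u_{n}|^{2}\,dx>K$; since $u_{n}\to 0$ in $L^{2}_{loc}(\R^{N})$ we must have $|\xi_{n}|\to\infty$. Setting $\tilde u_{n}=u_{n}(\cdot+\xi_{n})$, boundedness gives $\tilde u_{n}\rightharpoonup \tilde u\neq 0$, and because $V(\cdot+\xi_{n})\to V_{\infty}$ and $b(\cdot+\xi_{n})\to b_{\infty}$ locally uniformly while $\|\J'(u_{n})\|_{*}\to 0$, one checks that $\langle \J_{\infty}'(\tilde u),\phi\rangle=0$ for all $\phi\in C^{\infty}_{c}(\R^{N})$, so $\tilde u$ is a nontrivial critical point of $\J_{\infty}$ and $\J_{\infty}(\tilde u)\ge m_{\infty}$. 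Finally, using $b(x)\ge b_{\infty}$, $G(t)=f(t)t-2F(t)\ge 0$, the change of variables $x\mapsto x+\xi_{n}$ and Fatou's lemma,
\begin{equation*}
c=\lim_{n\to\infty}\frac12\int_{\R^{N}}b(x)G(u_{n})\,dx\ge \liminf_{n\to\infty}\frac12\int_{\R^{N}}b_{\infty}G(\tilde u_{n})\,dx\ge \frac12\int_{\R^{N}}b_{\infty}G(\tilde u)\,dx=\J_{\infty}(\tilde u)\ge m_{\infty},
\end{equation*}
contradicting (\ref{minfty}). Hence $u\neq 0$ is the desired nontrivial critical point of $\J$.

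The main obstacle is the strict inequality $c<m_{\infty}$: it is precisely here that the potential well hypothesis $(V3)$ and the strict comparison (\ref{3.5}) enter, and the use of the autonomous problem — via its Pohozaev least energy solution together with the scaling path of Proposition \ref{prop3.1} — is what lets us bound the mountain pass level strictly below $m_{\infty}$. The second delicate point is purely technical: verifying that the escaping translates $\tilde u_{n}$ converge to a critical point of $\J_{\infty}$ and that the error terms $\int_{\R^{N}}(V_{\infty}-V)u_{n}^{2}\,dx$ and $\int_{\R^{N}}(b-b_{\infty})F(u_{n})\,dx$ are negligible, which follows from the $L^{2}_{loc}$ convergence and the decay of $V_{\infty}-V$ and $b-b_{\infty}$ at infinity.
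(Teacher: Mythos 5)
Your proof is correct, and it uses the same essential machinery as the paper (the scaling path of Proposition \ref{prop3.1} built on the Pohozaev identity, the strict comparison (\ref{3.5}), the vanishing/non-vanishing dichotomy via Lemma \ref{lion}, translation of the non-vanishing sequence, and Fatou's lemma), but you organize the contradiction differently. You first establish the strict inequality $c<m_{\infty}$ by running the path through a \emph{least energy} solution $w$ of the autonomous problem at infinity, and then derive $c\ge \J_{\infty}(\tilde u)\ge m_{\infty}$ from the translated limit $\tilde u$; this requires as an extra input that $m_{\infty}$ is actually attained, i.e.\ the existence result of \cite{ChangWang}, not only the Pohozaev identity. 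The paper instead never needs $m_{\infty}$ or its attainment: it applies Proposition \ref{prop3.1} directly to the particular nontrivial critical point $v$ of $\J_{\infty}$ obtained as the weak limit of the translates, for which Fatou gives $\J_{\infty}(v)\le c$, and then the path through $v$ yields $c<\J_{\infty}(v)\le c$. Your version matches the strategy announced in the paper's introduction and is perhaps more transparent conceptually, at the cost of one additional existence theorem; the paper's version is more economical. Both arguments are sound, and your verification that the escaping translates converge to a critical point of $\J_{\infty}$ (via local uniform convergence of $V(\cdot+\xi_{n})$ and $b(\cdot+\xi_{n})$) is an acceptable alternative to the paper's route of first showing $\{u_{n}\}$ is a Palais--Smale sequence for the translation-invariant functional $\J_{\infty}$.
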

\begin{proof}
Let $c$ be the Mountain Pass level for $\J$. We know that $\J$ has a Cerami sequence $\{u_{n}\}$ at the level $c$, which is bounded by Lemma \ref{lem2.3}. Then, by Lemma \ref{compemb}, follows that $u_{n}\rightharpoonup u$ in $\h$ and $\J'(u)=0$. We claim to prove that $u\not\equiv 0$.\\
Assume by contradiction that $u= 0$. Taking into account $(V3)$, $u_{n}$ converges to $u$ in $L^{2}_{loc}(\R^{N})$ and (\ref{3.11}) we can deduce that 
\begin{align*}
|\J_{\infty}(u_{n}) - \J(u_{n})|\leq \int_{\R^{N}} [V_{\infty}-V(x)] u_{n}^{2} dx + \int_{\R^{N}} [b_{\infty}-b(x)] F(u_{n}) dx \rightarrow 0
\end{align*}
and
\begin{align*}
||\J_{\infty}(u_{n}) - \J(u_{n})||&\leq \sup_{\overset{\phi \in \h}{||\phi||_{\h}=1}} \Bigl\{\Bigl|\int_{\R^{N}} [V_{\infty}-V(x)] u_{n} \phi dx\Bigr| \\
&+ \Bigl|\int_{\R^{N}} [b_{\infty}-b(x)] f(u_{n}) \phi dx\Bigr| \Bigr\}\rightarrow 0
\end{align*}
that is $u_{n}$ is a Palais-Smale sequence for $\J_{\infty}$ at the level $c$. \\
Now we define 
\begin{equation}\label{3.6}
\delta:= \lim_{n\rightarrow \infty} \sup_{\xi \in \R^{N}} \int_{B_{2}(\xi)} u_{n}^{2} dx.
\end{equation}
If $\delta=0$, proceeding similarly to (\ref{2.25}), we deduce that $||u_{n}||^{2}\rightarrow 0$ which contradicts with Lemma \ref{lem2.1}. So, $\delta>0$ and there exists $\{\xi_{n}\}\subset \Z^{N}$ such that
\begin{equation}\label{3.7}
\int_{\B_{2}(\xi_{n})} |u_{n}|^{2} dx \geq \frac{\delta}{2}>0. 
\end{equation}
Let $v_{n}= u_{n}(x+\xi_{n})$. Then 
\begin{align*}
& ||v_{n}||= ||u_{n}||,\\
& \J_{\infty}(v_{n})= \J_{\infty}(u_{n}),\\ 
&\J'_{\infty}(v_{n})= \J'_{\infty}(u_{n}).
\end{align*} 
Therefore $\{v_{n}\}$ is a bounded Palais-Smale sequence for $\J_{\infty}$. As in the proof of Theorem \ref{thm1}, by (\ref{3.7}) we deduce that $v_{n}\rightharpoonup v$ in $\h$ and $v$ is a nontrivial critical point for $\J_{\infty}$. \\
Moreover, proceeding as in (\ref{2.26}) we have
\begin{align*}
\J_{\infty}(v) \leq c.
\end{align*}
Now, by using Proposition \ref{prop3.1} with $\displaystyle{g(t)= b_{\infty}f(t)-V_{\infty}t}$, we deduce the existence of $\gamma_{\infty} \in C( [0,1], \h)$ such that $\gamma_{\infty}(0)=0$, $\J_{\infty}(\gamma_{\infty}(1))<0$, $v\in \gamma_{\infty}([0,1])$ and 
\begin{equation*}
\max_{t\in [0,1]} \J_{\infty}(\gamma_{\infty}(t)) = \J_{\infty}(v). 
\end{equation*}
Since $0\notin \gamma_{\infty}((0,1])$, by (\ref{3.5}) follows that, for all $t\in (0,1]$
\begin{equation}\label{3.8}
\J(\gamma_{\infty}(t))<\J_{\infty}(\gamma_{\infty}(t)). 
\end{equation}
In particular $\J(\gamma_{\infty}(1))\leq \J_{\infty}(\gamma_{\infty}(1))<0$, so $\gamma_{\infty} \in \Gamma$. Then, taking into account  $\J(0)=\J_{\infty}(0)=0$, (\ref{3.8}) and $c>0$, we deduce that
\begin{equation*}\label{3.9}
c\leq \max_{t\in [0,1]} \J(\gamma_{\infty}(t)) < \max_{t\in [0,1]} \J_{\infty}(\gamma_{\infty}(t))= \J_{\infty}(v)\leq c
\end{equation*}
which gives a contradiction.

\end{proof}

\begin{remark}\label{remark2}
Let us observe that being $\{u_{n}\}$ a Cerami sequence for $\J$ at the level $c$ and $u_{n}\rightharpoonup u$ in $\h$, by using a similar argument as in (\ref{2.26}), we can deduce that $\J(u)\leq c$. 
\end{remark}

\noindent
Finally, we give the proof of Theorem \ref{thm2}
\begin{proof}[Proof of Theorem \ref{thm2}]

Let $\displaystyle{m=\inf \{\J(u) : u\neq 0 \mbox{ and } \J'(u)=0\}}$ and we denote by $u$ the nontrivial critical point for $\J$ obtained in the previous Lemma. \\
Then (see Remark \ref{remark2}) we can see 
\begin{equation}\label{3.10}
0\leq m \leq \J(u)\leq c. 
\end{equation}
Now, let $\{u_{n}\}$ be a sequence of nontrivial critical points for $\J$ such that $\J(u_{n})\rightarrow m$. As in the proof of Theorem \ref{thm1}, we have that $\{u_{n}\}$ is a Cerami bounded sequence at the level $m$ and $\delta>0$, where $\delta$ is defined via (\ref{3.6}). \\
Extracting a subsequence, $u_{n}\rightharpoonup \tilde{u}$ in $\h$, and $\tilde{u}$ is a critical point for $\J$ satisfying $\J(\tilde{u})\leq m$ as in (\ref{2.26}). \\
Now, if $\tilde{u}=0$, $\{u_{n}\}$ is a bounded Palais-Smale sequence for $\J_{\infty}$ at the level $m$. Since $\delta>0$, we deduce that $v_{n}$, which is a suitable translation of $\{u_{n}\}$, converges weakly to some critical point $v\neq 0$ for $\J_{\infty}$ and $\J_{\infty}(v)\leq m$. \\
Proceeding similarly to the proof of Lemma \ref{JT},  by Proposition \ref{prop3.1} follows that there exists $\gamma_{\infty} \in \Gamma_{\infty}\cap \Gamma$ such that
\begin{equation*}
c\leq \max_{t\in [0,1]} \J(\gamma_{\infty}(t)) < \max_{t\in [0,1]} \J_{\infty}(\gamma_{\infty}(t))= \J_{\infty}(v)\leq m
\end{equation*}     
which is a contradiction because of (\ref{3.10}). As a consequence $\tilde{u}$ is a nontrivial critical point for $\J$ such that $\J(\tilde{u})=m$. 
\end{proof}


\begin{thebibliography}{777}


\bibitem{AR}
A. Ambrosetti and P. H. Rabinowitz,
{\it Dual Variational Methods in Critical Point Theory and Applications},
J. Funct. Anal. {\bf 14} (1973), 349--381.

\bibitem{CS1}
X. Cabr{{\'e}} and Y.Sire,
{\it Nonlinear equations for fractional Laplacians I: regularity, maximum principles, and Hamiltonian estimates},
Ann. Inst. H. Poincare Anal. Non Lineaire {\bf 31} (2014), 23--53.

\bibitem{CSS}
L.Caffarelli, S.Salsa and L.Silvestre, 
{\it Regularity estimates for the solution and the free boundary of the obstacle problem for the fractional {L}aplacian},
Invent. Math. {\bf 171} (2008), 425--461.

\bibitem{CafSil}
L.A. Caffarelli and L.Silvestre,
{\it An extension problem related to the fractional Laplacian},
Comm. Partial Differential Equations {\bf 32} (2007),1245--1260.

\bibitem{ChangWang}
X. J. Chang and Z.-Q. Wang, 
{\it Ground state of scalar field equations involving fractional {L}aplacian with general nonlinearity}, Nonlinearity {\bf 26} (2013), 479--494.

\bibitem{Cheng}
M. Cheng, 
{\it Bound state for the fractional Schr\"odinger equation with unbounded potential},
J. Math. Phys. {\bf 53}, 043507 (2012).

\bibitem{DPV}
E. Di Nezza, G. Palatucci, E. Valdinoci,
{\it Hitchhiker's guide to the fractional Sobolev spaces},
Bull. Sci. Math. {\bf 136} (2012), 521--573.

\bibitem{Ek}
I. Ekeland, 
{\it Convexity methods in Hamiltonian Mechanics}, 
Springer (1990).

\bibitem{FQT}
P. Felmer, A. Quaas, and J. G. Tan, 
{\it Positive solutions of nonlinear Schr\"odinger equation with the fractional {L}aplacian}
Proc. Roy. Soc. Edinburgh Sect. A {\bf 142} (2012), no. 6, 1237--1262.

\bibitem{FL}
R. Frank and E. Lenzmann, 
{\it Uniqueness and nondegeneracy of ground states for $(-\Delta)^{s} Q+Q-Q^{\alpha +1}$ in $\R$},
e-print arXiv:1009.4042.

\bibitem{J}
L. Jeanjean, 
{\it On the existence of bounded Palais-Smale sequences and application to a Landesman- Lazer type problem set on $\R^{N}$},
Proc. Roy. Soc. Edinburgh Sect.A, {\bf129} (1999), 787--809.

\bibitem{JT}
L. Jeanjean and K. Tanaka, 
{\it A positive solution for an asymptotically linear elliptic problem on $\mathbb{R}^N$ autonomous at infinity},
ESAIM Control Optim. Calc. Var. {\bf 7}, 597--614 (2002).

%\bibitem{LL}
%E. H. Lieb and  M. Loss, {\it Analysis}, 2nd Edition. Vol. 14 of Graduate Studies in Mathematics. American Mathematical Society, Providence, RI. 33.

\bibitem{Laskin1}
N. Laskin,
{\it Fractional quantum mechanics and L\`evy path integrals}, 
Phys. Lett. A {\bf 268} (2000), 298--305.

\bibitem{Laskin2}
N. Laskin, 
{\it Fractional Schr\"odinger equation}, 
Phys. Rev. E {\bf 66} (2002), 056108.

\bibitem{Liu}
S. B. Liu,  
{\it On ground states of superlinear $p$-Laplacian equations in $\mathbb{R}^{N}$},
J. Math. Anal. Appl. {\bf 361} (2010), 48--58.

\bibitem{Secchi1}
S. Secchi, 
{\it Ground state solutions for nonlinear fractional Schr\"odinger equations in $\R^{N}$}, 
J. Math. Phys. {\bf 54} (2013), 031501.

\bibitem{Secchi2}
S. Secchi, 
{\it Perturbation results for some nonlinear equations involving fractional operators},
Differ. Equ. Appl. {\bf 5} (2013), no. 2, 221--236

\bibitem{SV}
Y. Sire and E. Valdinoci,
{\it Fractional {L}aplacian phase transitions and boundary reactions: a geometric inequality and a symmetry result},
J. Funct. Anal. {\bf 256} (2009), 1842--1864.





\end{thebibliography}
\end{document}